\def\<{{\langle}}
\def\>{{\rangle}}
\def\note#1{{}}
\def\note#1{}
\def\beq{\begin{equation}}
\def\eeq{\end{equation}}
\newtheorem{theorem}{Theorem}[section]
\newtheorem{proposition}[theorem]{Proposition}
\newtheorem{corollary}[theorem]{Corollary}
\theoremstyle{definition}
\newtheorem{definition}[theorem]{Definition}
\newtheorem{remark}[theorem]{Remark}
\newtheorem{example}[theorem]{Example}
\theoremstyle{Definition and Notation}
\begin{document}

%\date{}

\title[On Gelfand graded commutative rings ]{On Gelfand graded commutative rings }

\author[M. Aqalmoun]{Mohamed Aqalmoun}
\address[Mohamed Aqalmoun]{Department of Mathematics, Higher Normal School, Sidi Mohamed Ben Abdellah University, Fez, Morocco.}
\email{ maqalmoun@yahoo.fr}

\keywords{Homogeneous spectrum, Gelfand graded ring, pm$^+$ graded ring, Zarisky topology, Normal space.}

\subjclass[2010]{\ Primary: 13A02 ; Secondary: 13A99.}

\begin{abstract}
This paper deals with the graded commutative rings in which every homogeneous prime ideal is contained in a unique homogeneous maximal ideal called Gelfand graded ring. The purpose is to establish some topological and algebraic characterizations of these rings, one of which is the algebraic analogue of the Urysohn's lemma. Finally we look at a special class of those graded rings called pm$^+$  graded rings which can be viewed as graded ring with a Gelfand strong property.
 %\\[+2mm]
 %\subjclassname{ 05C25, 14A15}\\
 %{\bf Keywords}: quasi-affine scheme, quasi-coherent module, flat , cohomology, functor.  
 \end{abstract}
\maketitle
\section{Introduction }
Let $G$ be a group with identity $e$ and $R$ be a commutative ring with unit. Then $R$ is called a $G$-graded ring if  there exist additive subgroups $R_g$ of $R$ indexed by elements $g\in G$ such that $R=\bigoplus_{g\in G}R_g$ and $R_gR_{g'}\subseteq R_{gg'}$ for all $g,g'\in G$, where $R_gR_{g'}$ consists of the finite sums of ring products $ab$ with $a\in R_g$ and $b\in R_{g'}$. The elements of $R_g$ are called homogeneous elements of $R$ of degree $g$. The homogeneous elements of the ring $R$ are denoted by $h(R)$, i.e. $h(R)=\cup_{g\in G}R_g$. If $a\in R$, then the element $a$ can be written uniquely as $\sum_{g\in G}a_g$, where $a_g\in R_g$ is called the $g$-component of $a$ in $R_g$.\par 
Let $R$ be a $G$-graded commutative ring and $I$ be an ideal of $R$. Then $I$ is called a graded ideal of $R$ if $I=\bigoplus_{g\in G}(I\cap R_g)$. If $I$ is a $G$-graded ideal of $R$, then the quotient ring $R/I$ is a $G$-graded ring. Indeed $R/I=\bigoplus_{g\in G}(R/I)_g$ where $(R/I)_g=(R_g+I)/I=\{x+I\ | x\in R_g\}$. Let $S\subseteq h(R)$ be a multiplicatively closed subset of $R$. Then the ring of fractions $S^{-1}R$ is a $G$-graded ring. Indeed $S^{-1}R=\bigoplus_{g\in G}(S^{-1}R)_g$ where $(S^{-1}R)_g=\{r/s\ \ | \ r\in h(R), s\in S \text{ and } \deg r=g\deg s \}$.\par 
A $G$-graded ideal $P$ of a $G$-graded ring $R$ is called $G$-graded prime ideal or homogeneous prime ideal of $R$ if $P\ne R$ and if whenever $r$ and $s$ are homogeneous elements of $R $ such that $rs\in P$, then either $r\in P$ or $s\in P$. The $G$-graded prime spectrum or homogeneous prime spectrum of $R$ is the set of all $G$-graded prime ideals of $R$, it is denoted by $G\mathrm{Spec}(R)$. A $G$-graded ideal $M$ of $R$ is said to be $G$-graded maximal ideal of $R$ if $M\ne R$ and if $J$ is a $G$-graded ideal of $R$ such that $M\subseteq J\subseteq R$, then $J=M$ or $J=R$, the set of all $G$-graded maximal ideals of $R$ is denoted by $G\mathrm{Max}(R)$. Note that, by Zorn's Lemma each proper $G$-graded ideal of $R$ is contained in a $G$-graded maximal ideal of $R$.    \par 
For a commutative ring $R$ and ideal $I$  of $R$, the variety of $I$ is the subset $V(I):=\{P\in \mathrm{Spec}R\ | I\subseteq P\}$. Then the collection $\{V(I)\ | I \text{ ideal of } R\}$ satisfies the axioms for the closed sets of a topology on $\mathrm{Spec}R$, called the Zariski topology. Note that for each ideal $I$, the set $\mathrm{Spec}R\setminus V(I)$ is an open subset, it is denoted $D(I)$. Recall that, the collection $D(f)$ where $f$ runs through $R$ forms a basis of opens for the Zariski topology. Similarly, let $R$ be a $G$-graded commutative ring and $I$ be a $G$-graded ideal of $R$, the $G$-variety of $I$ is the subset $V_G(I):=\{P\in G\mathrm{Spec}R\ | I\subseteq P\}$. The collection $\{V_G(I)\ | I \text{ is a } G\text{-graded ideal of } R\}$ satisfies the axioms for the closed sets of a topology on $G\mathrm{Spec}(R)$, called the Zariski topology, for more details see  \cite{Refai}, \cite{Daran}. Note that, the collection $D(r)$ where $r$ runs through $h(R)$ forms a basis of opens for the Zariski topology.\par 
A commutative ring $R$ is said to be a Gelfand ring (or, pm-ring) if each prime ideal of $R$ is contained in a unique maximal ideal of $R$. This class of rings has been originally introduced and studied by De Marco and Orsatti \cite{Marco}. Recall that a subspace $Y$ of a topological space $X$ is called a retract of $X$ if there exists a continuous map $\varphi :X\to Y$ such that for all $y\in Y$, $\varphi(y)=y$, and such a map $\varphi$ is called a retraction. One of the topological characterization of the Gelfand ring is that; a ring $R$ is a Gelfand if and only if $\mathrm{Max}(R)$ is a Zariski retract of $\mathrm{Spec}(R)$ see \cite{Marco}. Other topological and algebraic characterizations can be found in \cite{Marco},\cite{Maria}, \cite{Tariz}, and can be summarized as follow: 
\begin{theorem}
For a commutative ring $R$ the following statements are equivalent.
\begin{enumerate}
\item $R$ is a Gelfand ring.
\item $\mathrm{Max}(R)$ is a Zariski retract of $\mathrm{Spec}(R)$.  
\item $\mathrm{Spec}(R)$ is a normal space with respect to the Zariski topology.
\item For each $M\in \mathrm{Max}(R)$, $\mathrm{Spec}(R_M)$ is a closed subset of $\mathrm{Spec}(R)$.
\item If $M\neq M'\in \mathrm{Max}(R)$ then there exists $a\in R\setminus M$ and $b\in R\setminus M'$ such that $ab=0$.
\item If $a,a'\in R$ with $a+a'=1$, then there exists $b,b'\in R$ such that $(1-ab)(1-a'b')=0$.
\end{enumerate} 
\end{theorem}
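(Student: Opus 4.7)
I would organize the proof as a wheel of equivalences around (1), handling the purely algebraic block $(1)\Leftrightarrow(5)\Leftrightarrow(6)$ first, then using it to power the topological equivalences with (3), (4), and (2).

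\textbf{Algebraic block.} For $(1)\Rightarrow(5)$ I would argue by contradiction: if every product $ab$ with $a\notin M$, $b\notin M'$ were nonzero, then $S:=(R\setminus M)(R\setminus M')$ would be a multiplicative system avoiding $0$, and a prime disjoint from $S$ (produced via Zorn) would lie below both $M$ and $M'$, violating (1). The converse $(5)\Rightarrow(1)$ is a one-line check: a common prime $P\subseteq M\cap M'$ together with $ab=0\in P$ forces $a\in M$ or $b\in M'$. For $(6)\Rightarrow(5)$, use that distinct maximals $M, M'$ are comaximal, so $1=a+a'$ with $a\in M$, $a'\in M'$; applying (6) yields $(1-ab)(1-a'b')=0$ with $1-ab\equiv 1\pmod M$ and $1-a'b'\equiv 1\pmod{M'}$, giving the desired pair. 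The reverse $(5)\Rightarrow(6)$ is the most delicate step: given $a+a'=1$, one must assemble a single global $b$ that inverts $a$ modulo every maximal ideal in $V(a')$, and symmetrically a $b'$ for $a'$, using (5) to control the zero-divisor interactions between distinct maximals.

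\textbf{Topological block.} For $(3)\Rightarrow(1)$, suppose $P\subseteq M\cap M'$ with $M\neq M'$: every basic open $D(f)$ containing $M$ also contains $P$ (since $P\subseteq M$ means $f\notin M\Rightarrow f\notin P$), and similarly for $M'$, so the closed points $\{M\}$ and $\{M'\}$ cannot be separated by disjoint Zariski neighbourhoods, breaking normality. For $(1)\Rightarrow(3)$, reduce to separating disjoint closed sets $V(I), V(J)$: disjointness forces $I+J=R$, hence $i+j=1$ for some $i\in I, j\in J$; by $(1)\Rightarrow(6)$, choose $b,b'$ with $(1-ib)(1-jb')=0$, and observe $V(I)\subseteq D(1-ib)$, $V(J)\subseteq D(1-jb')$, with these two basic opens disjoint. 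For $(1)\Leftrightarrow(4)$, the image of the contraction $\mathrm{Spec}(R_M)\hookrightarrow\mathrm{Spec}(R)$ is $\{P: P\subseteq M\}$; assuming (1), for each $M'\neq M$ pick $a_{M'}\in R\setminus M$ and $b_{M'}\in R\setminus M'$ with $a_{M'}b_{M'}=0$ via (5), and verify $\{P: P\subseteq M\}=\bigcap_{M'\neq M}V(b_{M'})$ directly; conversely, if $P\subseteq M\cap M'$ with $M\neq M'$, then $P\in\mathrm{Spec}(R_M)$ but $M'\in\overline{\{P\}}\setminus\mathrm{Spec}(R_M)$, contradicting that $\mathrm{Spec}(R_M)$ is closed.

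\textbf{Retract and main obstacle.} For $(2)\Rightarrow(1)$, continuity of a retraction $\varphi$ sends $\overline{\{P\}}$ into $\overline{\{\varphi(P)\}}=\{\varphi(P)\}$, so every maximal above $P$ is fixed by $\varphi$ and equal to $\varphi(P)$, proving uniqueness. For $(1)\Rightarrow(2)$, define $\varphi(P)$ to be the unique maximal above $P$; the retraction property is tautological, and continuity — that $\varphi^{-1}(D(f)\cap\mathrm{Max}(R))$ is open in $\mathrm{Spec}(R)$ — follows by expressing this preimage as a union of principal opens built from the zero-divisor pairs supplied by (5). I expect the sole serious obstacle to be $(5)\Rightarrow(6)$: producing a single global $b$ from pointwise zero-divisor data requires a careful glueing/ideal construction, whereas every other implication reduces to a short algebraic or topological verification once the algebraic block is in hand.
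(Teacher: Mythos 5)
Your plan is mostly sound, and several of the individual implications ($(1)\Leftrightarrow(5)$, $(6)\Rightarrow(5)$, $(3)\Rightarrow(1)$, $(2)\Rightarrow(1)$, and both directions of $(1)\Leftrightarrow(4)$) are argued correctly and completely. But there is a genuine gap at the one step you yourself flag: you never prove $(5)\Rightarrow(6)$, and since your proof of $(1)\Rightarrow(3)$ invokes ``$(1)\Rightarrow(6)$'', normality is also left unproven --- the whole topological half of your wheel hangs on an implication you only describe as ``delicate''. The missing idea is that (6) should not be assembled from the pointwise data of (5) at all, but obtained from (1) directly by a multiplicative-set argument: given $a+a'=1$, the set $T=\{(1-ab)(1-a'b')\ :\ b,b'\in R\}=(1+aR)(1+a'R)$ is multiplicatively closed; if $0\notin T$ there is a prime $P$ with $P\cap T=\emptyset$, hence $P+aR$ and $P+a'R$ are both proper and lie in maximal ideals $M\ni a$ and $M'\ni a'$, which are distinct because $a+a'=1$; then $P\subseteq M\cap M'$ contradicts (1). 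With this in hand, your derivation of (3) from (6) via $V(I)\subseteq D(1-ib)$, $V(J)\subseteq D(1-jb')$ goes through verbatim. A secondary weak point is $(1)\Rightarrow(2)$: ``expressing this preimage as a union of principal opens built from the zero-divisor pairs'' is not a proof; one actually has to show that for a closed $F\subseteq\mathrm{Max}(R)$ the preimage $\varphi^{-1}(F)$ equals $V\bigl(\bigcap_{\varphi(P)\in F}P\bigr)$, and the inclusion of $V\bigl(\bigcap_{\varphi(P)\in F}P\bigr)$ into $\varphi^{-1}(F)$ requires a prime-avoidance argument, not just (5).

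For comparison: the paper does not prove this theorem itself --- it quotes it from De Marco--Orsatti, Contessa and Aghajani--Tarizadeh --- but Section~2 proves the graded analogue of every item, and taking $G=\{e\}$ recovers exactly this statement. The paper's decomposition differs from yours precisely where you get stuck: it first establishes two prime-existence and avoidance lemmas (Propositions \ref{ExistsPrime} and \ref{Avoid}), uses them to show that (1) allows one to separate disjoint closed subsets of the maximal spectrum by a pair of annihilating elements (Theorem \ref{Disjoint}), deduces normality from that (Theorem \ref{Normal}), and only then derives the comaximality condition (6) \emph{from normality} (Theorem \ref{comax}), rather than from (5). Either that route or the multiplicative-set argument above will close your gap; as written, the proposal is not a complete proof.
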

In this paper, we investigate a generalization of Gelfand commutative ring to the class of $G$-graded commutative rings that we call Gelfand $G$-graded commutative ring. Let $R$ be a $G$-graded commutative ring, then $R$ is called a Gelfand $G$-graded ring if every homogeneous prime ideal of $R$ is contained in a unique maximal graded ideal of $R$. Note that if $G=\{e\}$ is the trivial group, then Gelfand $G$-graded ring and Gelfand ring coincide. Our aims is to establish a topological and algebraic characterizations of a such class of $G$-graded commutative rings. It is shown that a $G$-graded commutative ring $R$ is Gelfand $G$-graded ring if and only if $G\mathrm{Max}(R)$ is a Zariski retract of $G\mathrm{Spec}(R)$ if and only if $G\mathrm{Spec}(R)$ is a normal space. Also, we consider algebraic characterization of $G$-graded commutative ring, which yields a surprising characterization as follow, $R$ is a Gelfand $G$-graded ring if and only if $R_e$ is a Gelfand ring. Finally, we look at a special class of these rings called pm$^+$ graded rings which are $G$-graded rings with a  Gelfand strong property. A $G$-graded ring is called pm$^+$ $G$-graded ring if for each homogeneous prime ideal $P$ of $R$ the homogeneous prime ideals of $R$ containing $P$ form a chain. We draw some connection between Gelfand graded ring and pm$^+$ graded ring.      
%\section{Preliminaries}
%Here we recall some material which is needed in the sequel.
\section{Gelfand graded rings}
We start this section by the following result which will be relevant in the sequel.
\begin{proposition}\label{ExistsPrime}
 Let $R$ be a $G$-graded commutative ring and $I$ be a $G$-graded ideal of $R$. Let $(I_{\alpha})_{\alpha\in \Lambda}$ be a collection of proper $G$-graded ideals of $R$ such that $I\subseteq \cup_{\alpha\in \Lambda}I_{\alpha}$ with the property that, if $ab\in I$ where $a,b\in h(R)$ then $a\in \cup_{\alpha\in \Lambda}I_\alpha$ or $b\in \cup_{\alpha\in \Lambda}I_\alpha$. Then there exists a homogeneous prime ideal $Q$ of $R$ such that $I\subseteq Q$ and $Q\cap h(R)\subseteq \cup_{\alpha\in \Lambda}I_\alpha$. 
\end{proposition}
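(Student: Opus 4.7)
The plan is to apply Zorn's lemma. Set $T = \bigcup_{\alpha\in\Lambda} I_\alpha$ and consider
$$\mathcal{F} = \{J : J\text{ is a graded ideal of }R,\ I\subseteq J,\ \text{and for all }a,b\in h(R),\ ab\in J\Rightarrow a\in T\text{ or }b\in T\},$$
ordered by inclusion. The hypothesis on $I$ is precisely that $I\in\mathcal{F}$, and the defining condition is preserved by ascending chain unions (if $ab$ lies in the union of a chain it already lies in some member of the chain). Zorn's lemma therefore yields a maximal $Q\in\mathcal{F}$, which I claim is the sought homogeneous prime.

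Two easy observations first. Since each $I_\alpha$ is proper, $1\notin T$, so if $1\in Q$ the defining condition applied to $1\cdot 1\in Q$ would force $1\in T$; hence $Q$ is proper. Also, for $q\in Q\cap h(R)$ the defining condition applied to $q\cdot 1\in Q$ yields $q\in T$, giving $Q\cap h(R)\subseteq T$ as required.

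The heart of the argument is primality of $Q$. Suppose for contradiction $a,b\in h(R)$ with $ab\in Q$ but $a,b\notin Q$. Then $Q+Ra$ and $Q+Rb$ are graded ideals strictly larger than $Q$, so by maximality each fails the defining condition, producing homogeneous witnesses $u_1,v_1\in h(R)$ with $u_1v_1\in Q+Ra$ and $u_1,v_1\notin T$, and similarly $u_2,v_2$ with $u_2v_2\in Q+Rb$ and $u_2,v_2\notin T$. Decomposing by degree, using that $Q+Ra$ and $Q+Rb$ are graded, we obtain homogeneous presentations $u_1v_1=q_1+r_1a$ and $u_2v_2=q_2+r_2b$ with $q_i\in Q$. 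Expanding,
$$(u_1v_1)(u_2v_2)=q_1q_2+q_1r_2b+q_2r_1a+r_1r_2(ab)\in Q,$$
since $ab\in Q$. The plan is now to feed this four-factor homogeneous product back into the defining condition of $Q$, grouping it as $u_1\cdot(v_1u_2v_2)$ to force $v_1u_2v_2\in T$, then as $v_1\cdot(u_1u_2v_2)$ to force $u_1u_2v_2\in T$, and so on, until the resulting constraints on various subproducts of $u_i,v_i$ become incompatible with $u_1,v_1,u_2,v_2\notin T$.

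The main obstacle lies in this final step: the defining condition of $Q$ only gives information about one two-factor grouping at a time, so it does not immediately force an individual $u_i$ or $v_i$ into $T$. The contradiction must be produced by iterating the condition cleverly across several regroupings of the four-factor product and combining the resulting membership statements; this is routine whenever the complement $h(R)\setminus T$ is multiplicatively closed (e.g. when the $I_\alpha$ happen to be homogeneous primes, the intended use case), reducing the argument to the standard proof that a graded ideal maximal with respect to avoiding a multiplicatively closed set of homogeneous elements is a homogeneous prime.
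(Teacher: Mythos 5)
Your route differs in form from the paper's: the paper forms the multiplicative set $S$ of all finite products of elements of $h(R)\setminus T$, asserts $I\cap S=\emptyset$, and then invokes the standard Zorn-type lemma giving a homogeneous prime containing $I$ and disjoint from $S$; you instead run Zorn directly on the family $\mathcal F$ of graded ideals containing $I$ that satisfy the two-factor condition. Your preliminary steps (existence of a maximal $Q$, properness, $Q\cap h(R)\subseteq T$) are fine. But the primality step is a genuine gap, and you have located it yourself: the defining condition of $\mathcal F$ only controls two-factor homogeneous products, so the four-factor product $u_1v_1u_2v_2\in Q$ only yields statements of the form ``$u_1\in T$ or $v_1u_2v_2\in T$'', and membership of a product in $T=\bigcup_{\alpha}I_\alpha$ says nothing about its factors, because the $I_\alpha$ are arbitrary proper graded ideals rather than primes. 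Your fallback --- that the argument is routine when $h(R)\setminus T$ is multiplicatively closed --- does not cover the cases in which the paper actually applies the proposition (there the $I_\alpha$ are intersections $M\cap M'$ of graded maximal ideals, and $h(R)\setminus T$ is not multiplicatively closed). So the proof is not complete as written.

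You should also know that this gap cannot be closed, because the statement as printed is false; the paper's own proof stumbles at the very same point, namely the unjustified opening claim that the two-factor hypothesis implies its $n$-factor analogue (equivalently, that $I\cap S=\emptyset$). Take $G=\{e\}$, $R=k[x,y,z]$, $I=(xyz)$, and $I_1=(xy)$, $I_2=(xz)$, $I_3=(yz)$, so $T=I_1\cup I_2\cup I_3$. Then $I\subseteq I_1\subseteq T$, and if $uv\in I$ then $xyz\mid uv$, so by pigeonhole at least two of the primes $x,y,z$ divide the same factor, which therefore lies in some $(pq)\subseteq T$; thus the two-factor hypothesis holds. Yet any prime $Q\supseteq I$ contains one of $x,y,z$, and none of these lies in $T$, so there is no prime $Q$ with $I\subseteq Q$ and $Q\cap h(R)\subseteq T$. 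The proposition becomes true (by your argument or the paper's) if one strengthens the hypothesis to: every finite product $r_1\cdots r_n$ of homogeneous elements with $r_1\cdots r_n\in I$ has some $r_i\in T$ --- and that stronger hypothesis is what would have to be checked in each application.
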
 
\begin{proof}
Note that by hypothesis if $r_1,\ldots,r_n\in h(R)$ such that $r_1\ldots r_n\in I$, then $r_i\in \cup_{\alpha\in \Lambda}I_\alpha$ for some $i$. Now, consider 
$$S=\{r_1\ldots r_n\ / \  n\ge 1, \ r_i\in h(R)-\cup_{\alpha\in \Lambda}I_\alpha\} $$ 
Observe that $S$ is a multiplicatively closed subset of $R$ and $S\subseteq h(R)$. Moreover $I\cap S=\emptyset$ . By Zorn's Lemma (see \cite[Lemma $4.7$]{Wu}), there exists a homogeneous prime ideal $Q$ of $R$ such that $I\subseteq Q$ and $Q\cap S=\emptyset$. If $r\in Q\cap h(R)$, then $r\not\in S$, that is $r\in \cup_{\alpha\in \Lambda}I_\alpha$. Thus $Q\cap h(R)\subseteq \cup_{\alpha\in \Lambda}I_{\alpha}$. 
\end{proof}
\begin{remark}
Let $R$ be a $G$-graded commutative ring and $I\subseteq J$ be a $G$-graded ideals of $R$ such that if $ab\in I $ where $a,b\in h(R)$ then $a\in J$ or $b\in J$. Then there exists a homogeneous prime ideal $Q$ of $R$ such that $I\subseteq Q$ and $Q\cap h(R)\subseteq J$ that is $I\subseteq Q\subseteq J$.
\end{remark}
The following result is the avoidance property for the collection of graded maximal ideals containing a fixed graded ideal.
\begin{proposition}\label{Avoid}
Let $R$ be a $G$-graded commutative ring and $I$ be a $G$-graded ideal of $R$ and $Q\in G\mathrm{Spec}R$ such that $Q\cap h(R)\subseteq \cup_{M\in \Lambda_I}M$ where $\Lambda_I$ is the collection of all $G$-graded maximal ideals of $R$ containing $I$. Then $Q\subseteq M$ for some $M\in \Lambda_I$.
\end{proposition}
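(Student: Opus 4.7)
The plan is to reduce the statement to showing that the graded ideal $I+Q$ is proper; once this is established, Zorn's Lemma (invoked as in the introduction) produces a $G$-graded maximal ideal $M \supseteq I+Q$, which then lies in $\Lambda_I$ and contains $Q$.

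To show $I + Q \ne R$, I would argue by contradiction. Suppose $I + Q = R$, so $1 = a + b$ for some $a \in I$ and $b \in Q$. The crucial maneuver is to pass to the $e$-components: since $1 \in R_e$ and both $I$ and $Q$ are $G$-graded, writing $a = \sum_g a_g$ and $b = \sum_g b_g$ in their homogeneous decompositions and comparing $e$-components yields
\[
1 \;=\; a_e + b_e, \qquad a_e \in I \cap R_e, \quad b_e \in Q \cap R_e.
\]
In particular $b_e$ is a homogeneous element of $Q$, so the hypothesis $Q \cap h(R) \subseteq \bigcup_{M \in \Lambda_I} M$ forces $b_e \in M_0$ for some $M_0 \in \Lambda_I$. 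Since $M_0 \supseteq I$, we also have $a_e \in M_0$, whence $1 = a_e + b_e \in M_0$, contradicting the properness of $M_0$.

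Therefore $I + Q$ is a proper $G$-graded ideal of $R$ (being a sum of $G$-graded ideals, it is automatically $G$-graded), and by the Zorn's Lemma argument recalled in the introduction it is contained in some $G$-graded maximal ideal $M$. Since $I \subseteq M$, we have $M \in \Lambda_I$, and since $Q \subseteq M$, the conclusion follows.

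I do not anticipate a serious obstacle here: the only genuinely graded input is the trick of extracting the $e$-component of the equation $1 = a + b$, which works precisely because $1 \in R_e$ and $I, Q$ are graded. The rest is formal, and in particular no appeal to the finite prime avoidance lemma is needed — the hypothesis on $Q \cap h(R)$ is used only at the single homogeneous element $b_e$.
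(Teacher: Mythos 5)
Your proposal is correct and follows essentially the same route as the paper's own proof: show $I+Q$ is proper by passing to the $e$-component of $1=a+b$, use the hypothesis on the single homogeneous element of $Q$ to land it in some $M_0\in\Lambda_I$, derive $1\in M_0$ for a contradiction, and then take a graded maximal ideal above $I+Q$. No differences worth noting.
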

\begin{proof}
Take $J=Q+I$, then $J$ is a $G$-graded ideal of $R$. Moreover $J\ne R$. In fact; if $1\in J$, then $1=a+b$ where $a\in Q$ and $b\in I$, thus $1=a_e+b_e$ where $a_e$ (respectively $b_e$) is the $e$-component of $a$ (respectively of $b$). Since $I$ and $Q$ are $G$-graded ideals of $R$, we have  $a_e\in Q$ and $b_e\in I$. It follows that $a_e\in M'$ for some $M'\in \Lambda_I$, thus $1=a_e+b_e\in M'$, a contradiction. As $J$ is proper $G$-graded ideal of $R$, it is contained in a $G$-graded maximal ideal $M$. Now, observe that $I\subseteq M$ and $Q\subseteq M$.  
\end{proof}
\begin{definition}
Let $R$ be a $G$-graded commutative ring. We say that $R$ is a Gelfand graded ring if each homogeneous prime ideal of $R$ is contained in a unique graded maximal ideal of $R$. 
\end{definition}
\begin{example}
\begin{enumerate}
\item A local graded commutative ring is clearly a Gelfand graded ring.
\item Let $R$ be a Gelfand commutative ring and $S=\{\begin{pmatrix}
a&b\\ 0&a
\end{pmatrix} \ | a,b\in R\}$, then $S$ is a $\Bbb Z_2$-graded ring with $S_0=\{\begin{pmatrix}
a&0\\ 0&a
\end{pmatrix} \ | a\in R\}$ and $S_1=\{\begin{pmatrix}
0&b\\ 0&0
\end{pmatrix} \ | b\in R\}$. Then $S$ is a Gelfand $\Bbb Z_2$-graded ring since $\Bbb Z_2\mathrm{Spec}(S)$ and $\mathrm{Spec}(R)$ are homeomorphic with respect to the Zariski topologies, see \cite{Aqa}.
\end{enumerate}
\end{example}
The following Theorem is a topological characterization of Gelfand graded ring it terms of retraction. 
\begin{theorem}
Let $R$ be a $G$-graded commutative ring. The following statements are equivalent.
\begin{enumerate}
\item $R$ is a Gelfand $G$-graded ring.
\item $G\mathrm{Max}(R)$ is a Zariski retract of $G\mathrm{Spec}(R)$.
\end{enumerate} 
\end{theorem}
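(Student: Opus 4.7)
The plan is to prove the two implications separately, using complementary topological tools.

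For $(2) \Rightarrow (1)$ the argument is via closure preservation under the continuous retraction. I would use two basic observations: first, $\overline{\{P\}} = V_G(P)$ for every $P \in G\mathrm{Spec}(R)$; second, a graded maximal ideal $M$ is a closed point of $G\mathrm{Spec}(R)$, since any graded prime strictly containing $M$ would violate graded maximality, forcing $V_G(M) = \{M\}$. Given a continuous retraction $\varphi$ and any graded maximal $M$ above a graded prime $P$, the membership $M \in V_G(P) = \overline{\{P\}}$ combined with continuity yields $\varphi(M) \in \overline{\{\varphi(P)\}} = \{\varphi(P)\}$. The retraction identity $\varphi(M) = M$ then forces $M = \varphi(P)$, proving uniqueness of the graded maximal above $P$; existence is by Zorn's lemma applied to proper graded ideals containing $P$.

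For $(1) \Rightarrow (2)$, I would define $\varphi\colon G\mathrm{Spec}(R) \to G\mathrm{Max}(R)$ by sending each $P$ to its unique enclosing graded maximal ideal. The set-theoretic retraction property is immediate, so the task is continuity. Since $\{D(r) \cap G\mathrm{Max}(R) : r \in h(R)\}$ forms a subbase of opens on $G\mathrm{Max}(R)$ in the subspace topology, continuity reduces to checking that each preimage
$$\varphi^{-1}(D(r) \cap G\mathrm{Max}(R)) = \{P \in G\mathrm{Spec}(R) : r \notin \varphi(P)\}$$
is open in $G\mathrm{Spec}(R)$. By Gelfand uniqueness, $r \notin \varphi(P)$ is equivalent to the graded ideal $P+(r)$ failing to be contained in any graded maximal, and hence to $P + (r) = R$; so the preimage coincides with $U_r := \{P : P + (r) = R\}$.

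The crux is to show $U_r$ is open. My plan is pointwise: given $P_0 \in U_r$, produce a homogeneous element $s \in h(R) \setminus P_0$ such that $s \in Q$ for every graded prime $Q$ with $Q + (r) \ne R$; then $D(s)$ is a basic open neighborhood of $P_0$ inside $U_r$. The existence of such an $s$ is the heart of the matter, and I would extract it through a contrapositive application of Proposition~\ref{Avoid} to the family $\Lambda_{(r)}$ of graded maximal ideals containing $r$: under Gelfand uniqueness, the assumption $P_0 \subseteq M$ for any $M \in \Lambda_{(r)}$ would identify $\varphi(P_0)$ with $M$, contradicting $r \notin \varphi(P_0)$; hence the hypothesis of Proposition~\ref{Avoid} must fail, and the resulting homogeneous witness is the required separating element. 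The principal technical obstacle is orchestrating this avoidance step so that the witness lies outside $P_0$ rather than inside it, which requires delicately combining the Gelfand uniqueness with the graded avoidance of Proposition~\ref{Avoid}; once that is in hand, continuity of $\varphi$ and hence the retraction is established.
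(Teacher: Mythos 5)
Your direction $(2)\Rightarrow(1)$ is correct and is essentially the paper's argument: $M\in\overline{\{P\}}=V_G(P)$, continuity pushes this to $\varphi(M)\in\overline{\{\varphi(P)\}}=\{\varphi(P)\}$, and the retraction identity finishes. Your reduction of $(1)\Rightarrow(2)$ to the openness of $U_r=\{P: P+(r)=R\}$ for homogeneous $r$ is also a legitimate setup, and the identification $\varphi^{-1}(D(r)\cap G\mathrm{Max}(R))=U_r$ is correct since $(r)$ is a graded ideal when $r\in h(R)$.

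However, there is a genuine gap at exactly the point you flag as the ``principal technical obstacle,'' and the tool you propose cannot close it. You need, for each $P_0\in U_r$, a homogeneous $s\notin P_0$ lying in \emph{every} graded prime $Q$ with $Q+(r)\neq R$; equivalently, writing $I=\bigcap_{Q\notin U_r}Q$, you need $I\not\subseteq P_0$, i.e.\ the hard inclusion $V_G(I)\subseteq G\mathrm{Spec}(R)\setminus U_r$. The contrapositive of Proposition~\ref{Avoid} applied to $P_0$ and $\Lambda_{(r)}$ produces a homogeneous element $t\in P_0$ with $t\notin\bigcup_{M\in\Lambda_{(r)}}M$ — an element \emph{inside} $P_0$ avoiding the relevant maximal ideals, which has precisely the opposite membership pattern from the required $s$ (outside $P_0$, inside all primes below those maximal ideals), and no manipulation of Gelfand uniqueness converts one into the other. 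The paper closes this gap with Proposition~\ref{ExistsPrime}, which you never invoke: for $Q\notin\varphi^{-1}(F)$ one verifies by a two-case analysis that $I\subseteq\bigcup_{M\in F}(Q\cap M)$ satisfies the hypothesis of that proposition, extracts via the multiplicative set $S$ of products of homogeneous elements outside $\bigcup_{M\in F}(Q\cap M)$ and Zorn's lemma a graded prime $Q'$ with $I\subseteq Q'\subseteq Q$ and $Q'\cap h(R)\subseteq\bigcup_{M\in F}M$, and only then applies Proposition~\ref{Avoid} to $Q'$ (not to $P_0$) to conclude $\varphi(Q)=\varphi(Q')\in F$. Without this construction — or some substitute for it — your claimed separating element $s$ has not been produced, and the continuity of $\varphi$ remains unproved.
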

\begin{proof}
$(1)\Rightarrow (2)$. Let $\varphi:G\mathrm{Spec}(R)\to G\mathrm{Max}(R)$ where $\varphi(P)$ is the unique $G$-graded maximal ideal containing $P$. Let $F$ be a closed subset of $G\mathrm{Max}(R)$. It is easy to see that $F$ is the set of all $G$-graded maximal ideals of $R$ containing $J:=\cap_{M\in F}M$. We must show that $\varphi^{-1}(F)$ is a closed subset of $G\mathrm{Spec}R$. It is enough to show that $\varphi^{-1}(F)=V_G(I)$ where $I=\cap_{\varphi(P)\in F} P$.\par 
Let $P\in \varphi^{-1}(F)$, then $\varphi(P)\in F$, thus $I\subseteq P$ that is $ P\in V_G(I)$.\par 
Let $Q\in V_G(I)$ that is $I\subseteq Q$. In particular $I\subseteq Q\cap (\cup_{M\in F}M)$. Now, let $x,y\in h(R) $ with $xy\in I$ and $y\not\in Q\cap (\cup_{M\in F}M)$. We must show that $x\in Q\cap (\cup_{M\in F}M)$.\par 
\textbf{First case; }$y\not\in Q$. In this case $y\not\in I$ since $I\subseteq Q$, and $x\in Q$ since $Q$ is homogeneous prime. But $y\not\in I=\cap_{\varphi(P)\in F}P$ implies that $y\not\in P_0$ for some $P_0\in G\mathrm{Spec}R$ with $\varphi(P_0)\in F$. Since $xy\in I\subseteq P_0$ and $y\not\in P_0$, we get $x\in P_0\subseteq \varphi(P_0)\in F$. Thus $x\in\cup_{M\in F}M$. Therefore $x\in Q\cap (\cup_{M\in F}M)$.\par 
\textbf{Second case;} $y\not\in \cup_{M\in F}M$, that is $y\not\in M$ for all $M\in F$.  If $P\in \varphi^{-1}(F)$, then $P\subseteq \varphi(P)\in F$ so $y\not\in P$. We see that $xy\in I$, hence $xy\in P$ for all $P\in \varphi^{-1}(F)$, so $x\in P$ for all $P\in \varphi^{-1}(F)$. Therefore $x\in I$, thus $x\in Q\cap (\cup_{M\in F}M)$.\\ 
 Now, $I\subseteq \cup_{M\in F}(Q\cap M)$, by Lemma \ref{ExistsPrime}, there exists homogeneous prime ideal $Q'$ of $R$ such that $I\subseteq Q'$ and $Q'\cap h(R)\subseteq \cup_{M\in F}(Q\cap M)$.   On one hand $Q'\cap h(R)\subseteq Q$, implies that $Q'\subseteq Q$. On other hand $ Q'\cap h(R)\subseteq \cup_{M\in F}M$ and $F$ is the set of all graded maximal ideals of $R$ containing $J$, by Lemma \ref{Avoid}, $Q'\subseteq M$ for some $M\in F$. It follows that $\varphi(Q')=M$, therefore $\varphi(Q)=\varphi(Q')\in F$, that is $Q\in \varphi^{-1}(F)$.\par 
$(2)\Rightarrow (1)$. Assume that $G\mathrm{Max}(R)$ is a retract of $G\mathrm{Spec}(R)$. Let $\phi :G\mathrm{Spec}(R)\to G\mathrm{Max}(R)$ be any retraction.  Let $P\in G\mathrm{Spec}(R)$ and $M\in G\mathrm{Max}(R)$ containing $P$. Since $\phi(P)$ is a $G$-graded maximal ideal of $R$, $V_G(\phi(P))=\{\phi(P)\}$ is a closed subset of $G\mathrm{Max}(R)$. Thus $\phi^{-1}(\{\phi(P)\})$ is a closed subset of $G\mathrm{Spec}(R)$. It follows that $M\in \phi^{-1}(\{\phi(P)\})$ since $P\in \phi^{-1}(\{\phi(P)\})$ and closed subset are closed under specialization. Therefore $M=\phi(M)=\phi(P)$, hence $\phi(P)$ is the unique $G$-graded maximal ideal of $R$ containing $P$. 
\end{proof}
The following is  a characterization of a Gelfand graded ring by the closure of $G\mathrm{Spec}(R_M)$ in $G\mathrm{Spec}(R)$ where $M\in G\mathrm{Max}(R)$.  
\begin{theorem}
Let $R$ be a $G$-graded commutative ring. The following statements are equivalent.
\begin{enumerate}
\item $R$ is a Gelfand $G$-graded ring.
\item For every $G$-graded maximal ideal $M$ of $R$, $\{P\in G\mathrm{Spec}(R)\ | \ P\subseteq M\}$ is a Zariski closed subset of $G\mathrm{Spec}(R)$.
\end{enumerate}
\end{theorem}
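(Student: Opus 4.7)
The plan is to prove both implications by translating the set-theoretic condition $\{P \in G\mathrm{Spec}(R) \mid P \subseteq M\}$ into a statement about graded ideals via the closed sets $V_G(-)$. The forward direction will ride on the retract characterization just proved, while the reverse direction extracts uniqueness of the containing maximal from a single algebraic identity.

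For $(1) \Rightarrow (2)$, I would invoke the preceding theorem to obtain a continuous retraction $\varphi: G\mathrm{Spec}(R) \to G\mathrm{Max}(R)$, where $\varphi(P)$ is the unique $G$-graded maximal ideal of $R$ containing $P$. The key observation is that, by uniqueness, $\{P \in G\mathrm{Spec}(R) \mid P \subseteq M\} = \varphi^{-1}(\{M\})$. Since $\{M\} = V_G(M) \cap G\mathrm{Max}(R)$ is a closed point of $G\mathrm{Max}(R)$ in the subspace topology, continuity of $\varphi$ yields that the preimage is closed in $G\mathrm{Spec}(R)$, as required.

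For $(2) \Rightarrow (1)$, I would argue by contradiction. Suppose some homogeneous prime ideal $P$ is contained in two distinct $G$-graded maximal ideals $M_1$ and $M_2$. By hypothesis, the set $C_1 := \{Q \in G\mathrm{Spec}(R) \mid Q \subseteq M_1\}$ is Zariski closed, so $C_1 = V_G(I)$ for some $G$-graded ideal $I$ of $R$. The membership $P \in C_1 = V_G(I)$ gives $I \subseteq P$, and since $P \subseteq M_2$ we conclude $I \subseteq M_2$, i.e.\ $M_2 \in V_G(I) = C_1$. But then $M_2 \subseteq M_1$, and because $M_2$ is a $G$-graded maximal ideal with $M_1 \neq R$, this forces $M_1 = M_2$, a contradiction.

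The main conceptual hinge is the little identity $M_2 \in V_G(I)$ in the reverse direction, which converts the mere closedness of $C_1$ into a comparison of two maximal ideals; I expect no serious obstacle here, since once $C_1$ is written as $V_G(I)$, the chain $I \subseteq P \subseteq M_2$ immediately transports $M_2$ inside $C_1$. No Zorn-type arguments or avoidance lemmas such as Proposition \ref{ExistsPrime} and Proposition \ref{Avoid} will be needed for this proof; they are already built into the retract theorem used in $(1) \Rightarrow (2)$.
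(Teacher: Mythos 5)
Your proposal is correct and follows essentially the same route as the paper: the forward direction uses the retraction $\varphi$ and writes the set as $\varphi^{-1}(\{M\})$, and the reverse direction exploits that a closed set $V_G(I)$ containing $P$ must contain every graded prime above $P$. Your phrasing of the reverse step (via $I \subseteq P \subseteq M_2$) is a slightly more direct version of the paper's computation that $V_G(P)\cap Y_M = V_G(P)$, but the underlying idea — closed sets are stable under specialization — is identical.
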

\begin{proof}
$(1)\Rightarrow (2)$. Since $R$ is a Gelfand $G$-graded ring, the map $\varphi:G\mathrm{Spec}(R)\to G\mathrm{Max}(R)$, where $\varphi(P)$ is the unique $G$-graded maximal ideal of $R$ containing $P$, is a Zariski retract. Now, observe that $\{P\in G\mathrm{Spec}(R)\ | \ P\subseteq M\}=\varphi^{-1}(\{M\})$ which is a closed subset of $G\mathrm{Spec}(R)$ since $\varphi$ is continuous and $\{M\}$ is a closed subset of $G\mathrm{Max}(R)$. \par 
$(2)\Rightarrow (1)$. For a $G$-graded maximal ideal $M$ of $R$, we set $Y_M:=\{P\in G\mathrm{Spec}(R)\ | P\subseteq M\}$. Let $P\in G\mathrm{Spec}(R)$ and $M$ be a $G$-graded maximal ideal of $R$ containing $P$. Since $Y_M$ is a closed subset of $G\mathrm{Spec}(R)$, so is $V_G(P)\cap Y_M=\{Q\in G\mathrm{Spec}(R)\ | \ P\subseteq Q\subseteq M\}$. Therefore $V_G(P)\cap Y_M=V_G(I)$ where $I=\cap_{Q\in V_G(P)\cap Y_M}=P$, that is $ V_G(P)\cap Y_M=V_G(P)$. Now, if $M'$ is a $G$-graded maximal ideal of $R$ containing $P$, then $M'\in V_G(P)=V_G(P)\cap Y_M$, so that $M'\subseteq M$, thus $M=M'$.  
\end{proof}
Next, we investigate the separation criterion of Gelfand graded ring. Before, we start with its  algebraic aspect.  
\begin{theorem}\label{Disjoint}
Let $R$ be $G$-graded commutative ring. The following statements are equivalent.
\begin{enumerate}
\item $R$ is a Gelfand $G$-graded ring.
\item If $F$ and $F'$ are disjoint closed subsets of $G\mathrm{Max}(R)$, then there exists $r,r'\in h(R)$ with $rr'=0$, such that $r\not\in \cup_{M\in F} M$ and $r'\not\in \cup_{M'\in F'}M'$.
\end{enumerate}
\end{theorem}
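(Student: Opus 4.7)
My plan is to handle the two implications separately. The direction $(2) \Rightarrow (1)$ is straightforward, while $(1) \Rightarrow (2)$ is the substantive direction and will be proved by contradiction, converting the problem to a statement about homogeneous prime avoidance.

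For $(2) \Rightarrow (1)$ I argue contrapositively. Suppose some $P \in G\mathrm{Spec}(R)$ sits inside two distinct graded maximal ideals $M \neq M'$. The singletons $\{M\}$ and $\{M'\}$ are disjoint closed subsets of $G\mathrm{Max}(R)$, so hypothesis $(2)$ supplies $r, r' \in h(R)$ with $rr' = 0$, $r \notin M$, and $r' \notin M'$. Since $rr' = 0 \in P$ and $P$ is a homogeneous prime containing one of the homogeneous factors, either $r \in P \subseteq M$ or $r' \in P \subseteq M'$, contradicting the choice of $r, r'$.

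For $(1) \Rightarrow (2)$, given disjoint closed $F, F' \subseteq G\mathrm{Max}(R)$ (the edge cases $F = \emptyset$ or $F' = \emptyset$ are immediate on taking $r = 0$ or $r' = 0$), the key construction is to form
\[
S = h(R) \setminus \bigcup_{M \in F} M, \qquad S' = h(R) \setminus \bigcup_{M' \in F'} M',
\]
which are multiplicatively closed in $h(R)$ because each $M$, $M'$ is homogeneous prime, and then to pass to the product $T = \{ss' : s \in S,\ s' \in S'\} \subseteq h(R)$, which is again multiplicatively closed and contains $1$. The desired conclusion of $(2)$ is precisely the statement $0 \in T$, so I argue by contradiction and assume $0 \notin T$.

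Under this assumption the graded Zorn's lemma argument used in the proof of Proposition \ref{ExistsPrime} yields a homogeneous prime ideal $Q$ with $Q \cap T = \emptyset$. Since $1 \in S \cap S'$ gives $S \cup S' \subseteq T$, we deduce $Q \cap h(R) \subseteq \bigcup_{M \in F} M$ and $Q \cap h(R) \subseteq \bigcup_{M' \in F'} M'$. Writing $F = \Lambda_J$ and $F' = \Lambda_{J'}$ with $J = \bigcap_{M \in F} M$ and $J' = \bigcap_{M' \in F'} M'$, two applications of Proposition \ref{Avoid} produce $M_0 \in F$ and $M_0' \in F'$ with $Q \subseteq M_0$ and $Q \subseteq M_0'$. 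The Gelfand hypothesis $(1)$ then forces $M_0 = M_0'$, contradicting $F \cap F' = \emptyset$. The main obstacle is pinpointing the correct multiplicatively closed set to feed into the graded Zorn machinery; once one notices to combine $S$ and $S'$ into the product $T = S \cdot S'$, the rest reduces mechanically to the two propositions already at hand.
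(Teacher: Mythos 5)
Your proof is correct and follows essentially the same route as the paper: both directions rest on the graded Zorn/multiplicative-set argument behind Proposition \ref{ExistsPrime} together with the avoidance property of Proposition \ref{Avoid}, plus the observation that a closed $F\subseteq G\mathrm{Max}(R)$ equals $\Lambda_J$ for $J=\cap_{M\in F}M$. The only (cosmetic) difference is that by feeding the product set $T=S\cdot S'$ directly into the Zorn machinery you obtain the factorization $r\not\in\cup_{M\in F}M$, $r'\not\in\cup_{M'\in F'}M'$ immediately, whereas the paper applies Proposition \ref{ExistsPrime} to the family $\{M\cap M'\}$ and then needs a short case analysis to split the conclusion.
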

\begin{proof}
$(1)\Rightarrow (2)$.  Let $F,F'$ be a disjoint closed subsets of $G\mathrm{Max}(R)$. Then there is no homogeneous prime ideal contained in $(\cup_{M\in F}M)\cap (\cup_{M'\in F'}M')$. In fact, if $Q$ is a homogeneous prime ideal of $R$ with $Q\subseteq (\cup_{M\in F}M)\cap (\cup_{M'\in F'}M')$, then $Q\subseteq \cup_{M\in F}M$ and $Q\subseteq \cup_{M'\in F'}M'$. By \ref{Avoid}, $Q\subseteq M$ and $Q\subseteq M'$ for some $M\in F$ and $M'\in F'$, which is not possible since $M\ne M'$ and $R$ is a Gelfand $G$-graded ring. Now, observe that $0\subseteq (\cup_{M\in F}M)\cap (\cup_{M'\in F'}M')=\cup_{M\in F,M'\in F'}M\cap M'$, by Lemma \ref{ExistsPrime}, there exists homogeneous elements $r,r'\in R$ with $rr'=0$ and $r,r'\displaystyle\not\in (\cup_{M\in F}M)\cap (\cup_{M'\in F'}M')$. We see that $r\not\in \cup_{M\in F}M$ or $r\not\in \cup_{M'\in F'}M'$. Without lost of generality, we may assume that $r\not\in \cup_{M\in F}M$. So $r'\in M$ for all $M\in F'$ since $rr'=0\in M$. It follows that $r'\in \cup_{M\in F}M$, therefore $r'\not\in \cup_{M'\in F'}M'$.\par 
$(2)\Rightarrow (1)$. Let $P\in G\mathrm{Spec}(R)$. Assume $P\subseteq M\cap M'$ where $M\ne M'$ are $G$-graded maximal ideals of $R$. Observe that $\{M\}$ and $\{M'\}$ are disjoint closed subsets of $G\mathrm{Max}(R)$. By hypothesis there exists homogeneous elements $r,r'\in R$ with $rr'=0$, $r\not\in M$ and $r'\not\in M'$. But $rr'=0\in P$ implies that $r\in P$ or $r'\in P$, so $r\in M\cap M'$ or $r'\in M\cap M$ a contradiction.
\end{proof}
\begin{corollary}\label{Max}
Let $R$ be $G$-graded commutative ring. The following statements are equivalent.
 \begin{enumerate}
 \item $R$ is a $G$-graded Gelfand ring.
 \item If $M\ne M'\in G\mathrm{Max}(R)$, then $ xy=0$ for some homogeneous elements $x,y\in h(R)$ such that $x\not\in M'$ and $y\not\in M$.
 \end{enumerate}
\end{corollary}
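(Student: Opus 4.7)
The corollary is a direct specialization of Theorem \ref{Disjoint} to the case of singleton closed sets, so the plan is essentially to verify that singletons in $G\mathrm{Max}(R)$ are closed and then invoke that theorem in both directions.

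First I would note that for any $M\in G\mathrm{Max}(R)$, the singleton $\{M\}$ is a closed subset of $G\mathrm{Max}(R)$ in the Zariski topology: indeed $V_G(M)\cap G\mathrm{Max}(R)=\{M\}$ since the only graded maximal ideal containing $M$ is $M$ itself. Consequently, whenever $M\neq M'$ in $G\mathrm{Max}(R)$, the sets $F:=\{M'\}$ and $F':=\{M\}$ form a pair of disjoint closed subsets of $G\mathrm{Max}(R)$.

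For the implication $(1)\Rightarrow (2)$, assume $R$ is Gelfand $G$-graded and let $M\neq M'$ be in $G\mathrm{Max}(R)$. Applying Theorem \ref{Disjoint} to the disjoint closed subsets $F=\{M'\}$ and $F'=\{M\}$ yields homogeneous elements $x,y\in h(R)$ with $xy=0$, $x\notin\bigcup_{N\in F}N=M'$, and $y\notin\bigcup_{N\in F'}N=M$, which is exactly the statement of (2).

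For the converse $(2)\Rightarrow (1)$, I would argue by contradiction: suppose there is a homogeneous prime $P$ contained in two distinct graded maximal ideals $M\neq M'$. By hypothesis (2) there exist $x,y\in h(R)$ with $xy=0$, $x\notin M'$ and $y\notin M$. Since $xy=0\in P$ and $P$ is homogeneous prime, either $x\in P\subseteq M'$ or $y\in P\subseteq M$, each contradicting the choice of $x$ or $y$. Hence every homogeneous prime lies in a unique graded maximal ideal, and $R$ is Gelfand $G$-graded.

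The only non-routine point is the observation that singletons in $G\mathrm{Max}(R)$ are Zariski closed; once this is established, the corollary follows immediately from Theorem \ref{Disjoint}, and no real obstacle remains beyond assembling the argument cleanly.
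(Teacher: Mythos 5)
Your proposal is correct and follows essentially the same route as the paper: the forward direction specializes Theorem \ref{Disjoint} to the singleton closed sets $\{M\}$ and $\{M'\}$, and the converse is the same contradiction argument using a homogeneous prime $P\subseteq M\cap M'$ and the primality of $P$ applied to $xy=0$. Your extra remark that singletons in $G\mathrm{Max}(R)$ are Zariski closed is a harmless (and correct) elaboration of a point the paper leaves implicit.
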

\begin{proof}
$(1)\Rightarrow (2)$. Take $F=\{M\}$ and $F'=\{M'\}$ in the previous result.\par 
$(2)\Rightarrow (1)$. Let $P\in G\mathrm{Spec}(R)$. Assume $P\subseteq M\cap M'$ where $M\ne M'$ are $G$-graded maximal ideals of $R$. By hypothesis there are homogeneous elements $x,y\in h(R)$ such that $xy=0$, $x\not\in M$ and $y\not\in M'$. But $xy=0\in P$ implies that $x\in M\cap M'$ or $y\in M\cap M'$ a contradiction. 
\end{proof}
\begin{corollary}
If $R$ is a $G$-graded Gelfand ring, then $G\mathrm{Max}(R)$ is  Hausdorff. 
\end{corollary}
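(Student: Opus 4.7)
The plan is to exploit the concrete algebraic separation just established in Corollary \ref{Max} and transfer it into a topological separation via the basic Zariski open sets $D(r)$ with $r\in h(R)$, which form a basis of the topology on $G\mathrm{Spec}(R)$ (and hence on $G\mathrm{Max}(R)$ by restriction).

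Given two distinct graded maximal ideals $M\neq M'$, I would first invoke Corollary \ref{Max} to produce homogeneous elements $x,y\in h(R)$ with $xy=0$, $x\notin M'$ and $y\notin M$. Then I would set
\[
U=D(x)\cap G\mathrm{Max}(R),\qquad V=D(y)\cap G\mathrm{Max}(R),
\]
which are open in $G\mathrm{Max}(R)$ for the Zariski topology. The conditions $x\notin M'$ and $y\notin M$ give $M'\in U$ and $M\in V$, so both points are separated by open neighborhoods.

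It remains to show $U\cap V=\emptyset$. Suppose $N\in U\cap V$. Then $x\notin N$ and $y\notin N$; but every graded maximal ideal is graded prime (this is standard: if $ab\in N$ with $a,b\in h(R)$ and $a\notin N$, then $N+\langle a\rangle=R$ by maximality of $N$ among graded ideals, whence $b\in N$). Since $xy=0\in N$, primality forces $x\in N$ or $y\in N$, a contradiction. Hence $U$ and $V$ are disjoint, which establishes that $G\mathrm{Max}(R)$ is Hausdorff.

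There is no real obstacle here; the argument is a direct translation of the algebraic separation of Corollary \ref{Max} into the language of basic Zariski opens. The only point requiring a brief mention is the fact that graded maximal ideals are graded prime, which is an easy application of maximality.
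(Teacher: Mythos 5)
Your argument is correct and is exactly the intended one: the paper derives this corollary "immediately" from Corollary \ref{Max}, meaning precisely the translation you spell out, with $D(x)$ and $D(y)$ giving disjoint basic open neighborhoods of $M'$ and $M$ because $xy=0$ lies in every graded prime. Your added justification that graded maximal ideals are graded prime is a reasonable detail to include and is correct.
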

\begin{proof}
Follows immediately from the previous Corollary. 
\end{proof}
Recall that a topological space $X$ is normal or $T_4$ if, given any disjoint closed subsets $F$ and $F'$ of $X$, there are open  neighborhoods $U$ of $F$ and $V$ of $F'$ that are also disjoint. The next Theorem totally characterizes Gelfand graded rings by the normality of the homogeneous prime spectrum.
\begin{theorem}\label{Normal}
Let $R$ be a $G$-graded commutative ring. The following statements are equivalent.
\begin{enumerate}
\item $R$ is a $G$-graded Gelfand ring.
\item $G\mathrm{Spec}(R)$ is a normal space.
\end{enumerate} 
\end{theorem}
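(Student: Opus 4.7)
The plan is to handle the two implications separately, applying Theorem \ref{Disjoint} for $(1)\Rightarrow(2)$ and using the irreducibility of $V_G(P)=\overline{\{P\}}$ together with the fact that graded maximal ideals are closed points of $G\mathrm{Spec}(R)$ for $(2)\Rightarrow(1)$.

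For $(1)\Rightarrow(2)$, I would take two disjoint closed subsets of $G\mathrm{Spec}(R)$, write them as $V_G(I_1)$ and $V_G(I_2)$, and then pass to $G\mathrm{Max}(R)$. The traces $F_i:=V_G(I_i)\cap G\mathrm{Max}(R)$ are closed in the subspace topology on $G\mathrm{Max}(R)$, and they are disjoint since a common element would already lie in $V_G(I_1)\cap V_G(I_2)=\emptyset$. Theorem \ref{Disjoint} then supplies $r,r'\in h(R)$ with $rr'=0$, $r\notin\bigcup_{M\in F_1}M$ and $r'\notin\bigcup_{M\in F_2}M$. The basic opens $D(r)$ and $D(r')$ will serve as the separating neighborhoods: their intersection equals $D(rr')=D(0)=\emptyset$, and for any $P\in V_G(I_1)$ one extends $P$ (which contains $I_1$) to a graded maximal ideal $M\in F_1$ by Zorn's Lemma; then $r\notin M\supseteq P$ forces $r\notin P$, so $V_G(I_1)\subseteq D(r)$. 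The inclusion $V_G(I_2)\subseteq D(r')$ follows symmetrically.

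For $(2)\Rightarrow(1)$, I would argue by contradiction: assume $G\mathrm{Spec}(R)$ is normal and suppose some $P\in G\mathrm{Spec}(R)$ sits inside two distinct graded maximal ideals $M\neq M'$. The singletons $\{M\}=V_G(M)$ and $\{M'\}=V_G(M')$ are disjoint closed subsets of $G\mathrm{Spec}(R)$, so normality provides disjoint open neighborhoods $U\ni M$ and $V\ni M'$. The closed set $V_G(P)=\overline{\{P\}}$ is irreducible (it is the closure of a single point), and it meets both $U$ and $V$ since $M,M'\in V_G(P)$. But two non-empty open subsets of an irreducible space must intersect, which contradicts $U\cap V=\emptyset$.

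I expect the main technical obstacle to be the bookkeeping in the forward direction: one must carefully observe that disjointness of the $V_G(I_i)$ in $G\mathrm{Spec}(R)$ passes to disjointness of their traces on $G\mathrm{Max}(R)$, and that the basic opens $D(r),D(r')$ produced by Theorem \ref{Disjoint} actually cover the original closed sets (using Zorn to reach a maximal ideal from each prime). The reverse direction is essentially formal once one recognises that $\overline{\{P\}}=V_G(P)$ is irreducible, so no heavy machinery is needed there.
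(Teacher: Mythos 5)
Your proposal is correct. The forward implication is essentially the paper's own argument: pass to the traces $F=V_G(I)\cap G\mathrm{Max}(R)$ and $F'=V_G(J)\cap G\mathrm{Max}(R)$, invoke Theorem \ref{Disjoint} to get homogeneous $r,r'$ with $rr'=0$ avoiding the respective unions of maximal ideals, and separate with $D_G(r)$ and $D_G(r')$, using Zorn to push each prime in $V_G(I)$ up to a maximal ideal in $F$. The reverse implication is where you genuinely diverge. The paper shrinks the disjoint neighborhoods $U\ni M$, $V\ni M'$ to basic opens $D_G(r)\subseteq U$, $D_G(r')\subseteq V$, deduces that $rr'$ is nilpotent so that $r^nr'^n=0$, and then argues at the level of elements that no homogeneous prime can sit inside $M\cap M'$. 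You instead observe that $\overline{\{P\}}=V_G(P)$ is irreducible (being the closure of a point) and meets both $U$ and $V$ at $M$ and $M'$ respectively, so the two disjoint opens would have to intersect inside it --- a contradiction. Your route is shorter and purely point-set topological: it needs no basis of basic opens, no nilpotency computation, and no appeal to the primality of $P$ on elements; it only uses that $\{M\}$ is closed and that point closures are irreducible. The paper's version, by contrast, stays at the level of homogeneous elements and produces an explicit pair $a,b$ with $ab=0$ witnessing the failure, which is closer in spirit to the separation criteria (Theorem \ref{Disjoint} and Corollary \ref{Max}) developed elsewhere in the section. Both are complete proofs; just make sure to note explicitly (as you implicitly do) that a graded maximal ideal is a closed point of $G\mathrm{Spec}(R)$, i.e.\ $V_G(M)=\{M\}$.
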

\begin{proof}
$(1)\Rightarrow (2)$. Let $I,J$ be a $G$-graded ideals of $R$ with $V_G(I)\cap V_G(J)=\emptyset$, that is $ I+J=R$. Consider $F:=V_G(I)\cap G\mathrm{Max}(R)$ and $F':=V_G(J)\cap G\mathrm{Max}(R)$. Then $F,F'$ are disjoint closed subsets of $G\mathrm{Max}(R)$. By Theorem \ref{Disjoint}, there are homogeneous elements $r,r'\in R$ such that $rr'=0$, $r\not\in \cup_{M\in F}M$ and $r'\not\in \cup_{M'\in F'}M'$. Set $U=D_G(r)$ and $V=D_G(r')$. Then $U\cap V=D_G(r)\cap D_G(r')=D_G(rr'=0)=\emptyset$. If $P\in V_G(I)$, then $P\subseteq M$ for some $M\in F$, so that $r\not\in P$. It follows that $V_G(I)\subseteq U $. By same argument $V_G(J)\subseteq V$. Thus $G\mathrm{Spec}(R)$ is a normal space.\par 
$(2)\Rightarrow (1)$. Let $M\ne M'$ be a $G$-graded maximal ideals of $R$. Since $\{M\}$ and $\{M'\}$ are disjoint closed subsets of $G\mathrm{Spec}(R)$, there are disjoint open subsets $U$ and $V$ of $G\mathrm{Spec}(R)$ such that $\{M\}\subseteq U$ and $\{M\}\subseteq V$, that is $M\in U$ and $M'\in V$. The collection $D_G(r)$ where $r\in h(R)$ forms a basis for that Zariski topology of $G\mathrm{Spec}(R)$, so that $M\in D_G(r)\subseteq U$ and $M'\subseteq D_G(r')\subseteq V$. Since $U$ and $V$ are disjoint, so are $D_G(r)$ and $D_G(r')$, that is $rr'$ is nilpotent, hence $r^nr'^n=0$ for some $n\in \Bbb N$. Now, observe that $a:=r^n\not\in M\cap M'$, $b:=r'^n\not\in M\cap M'$ and $ab=0$. By \ref{ExistsPrime}, there is no homogeneous prime ideal contained in $M\cap M'$. Therefore $R$ is a $G$-graded Gelfand ring. 
\end{proof}
For a topological space $X$ two subsets $A$ and $B$ are said to be separated by a continuous function if there exists a continuous function $f:X\to \Bbb R$ such that $f(x)=0$ for all $x\in A$ and $f(x)=1$ for all $x\in B$. The Urysohn's lemma is an interesting result in topology, it furnishes a characterization of normality in terms of continuous function. It states that a topological space is normal if and only if any two disjoint closed subsets can be separated by a continuous function. In the next result, we give an algebraic version of the Urysohn lemma. To do, we fix some notations and conventions. Let $R$ be a $G$-graded ring and $r\in R$. If $P$ is a homogeneous prime ideal of $R$ the canonical image of $r$ in $R_P$ is denoted $r(P)$. Note that $r(P)=0$ if and only if $ra=0$ for some $a\in h(R)\setminus P$. Two subsets $F$ and $F'$ of $G\mathrm{Spec}(R)$ are said to be separated by a regular function if there exists $r\in R$ such that $r(P)=0$ for all $P\in F$ and $r(P)=1$ for all $P\in F'$. Any such element $r$ is called a Urysohn regular function for $F$ and $F'$. Note that if $r$ is an Urysohn regular function for $F$ and $F'$ then so is $r_e$ where $r_e$ is the $e$-component of $r$. So, one can choose $r$ to be homogeneous element.  
\begin{theorem}
Let $R$ be a $G$-graded ring. The following statements are equivalent.
\begin{enumerate}
\item $R$ is a Gelfand graded ring.
\item Any two closed subsets of $G\mathrm{Spec}(R)$ can be separated by a regular function.
\end{enumerate}
\end{theorem}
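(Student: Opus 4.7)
My plan is to handle the two implications separately, with $(2)\Rightarrow(1)$ being a short reduction to Corollary~\ref{Max} and the substantive work lying in the Urysohn-style construction for $(1)\Rightarrow(2)$.

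For $(2)\Rightarrow(1)$, I would take distinct $M\neq M'$ in $G\mathrm{Max}(R)$. The singletons $\{M\}$ and $\{M'\}$ are disjoint closed subsets of $G\mathrm{Spec}(R)$, so the hypothesis provides a Urysohn regular function $r$ for them, which the remark preceding the theorem lets me choose homogeneous of degree $e$. Unpacking $r(M)=0$ and $r(M')=1$ yields homogeneous $s_1\in h(R)\setminus M$ with $rs_1=0$ and $s_2\in h(R)\setminus M'$ with $(r-1)s_2=0$, i.e.\ $rs_2=s_2$. Then $s_1s_2=s_1(rs_2)=(s_1r)s_2=0$, with $s_1\notin M$ and $s_2\notin M'$, which is precisely the criterion of Corollary~\ref{Max}, so $R$ is Gelfand $G$-graded.

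For $(1)\Rightarrow(2)$, let $F=V_G(I)$ and $F'=V_G(J)$ be disjoint closed subsets of $G\mathrm{Spec}(R)$. Disjointness forces $I+J=R$, otherwise $I+J$ would lie in a graded maximal, producing a point in $F\cap F'$. Taking $e$-components of a decomposition $1=a+b$ gives $a\in I\cap R_e$ and $b\in J\cap R_e$ with $a+b=1$. Using the algebraic characterization that $R$ is Gelfand $G$-graded if and only if $R_e$ is Gelfand (noted in the introduction), I would apply property~(6) of Theorem~1 inside $R_e$ to obtain $c,d\in R_e$ with $(1-ac)(1-bd)=0$. Writing $u=1-ac$ and $v=1-bd$ one extracts $uv=0$, $acv=v$, $bdu=u$; a routine localization argument then gives $v(P)=0$ on $F$ (since $a\in P$ implies $u\notin P$, and $uv=0$ with $u\notin P$ forces $v$ to vanish in $R_P$) and symmetrically $u(P)=0$ on $F'$.

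The final step --- and the main obstacle --- is to assemble from $u$ and $v$ a single $r\in R_e$ realizing $r(P)=0$ for every $P\in F$ and $r(P)=1$ for every $P\in F'$ simultaneously. Taking $r=v$ achieves the condition on $F$, but $r(P)=1$ on $F'$ additionally requires $bd(P)=0$ there, which is not a formal consequence of $(1-ac)(1-bd)=0$. I would overcome this by sharpening the pair $(c,d)$ --- for instance, replacing them by $(cv,du)$ so that the updated witnesses absorb the missing annihilations via $acv=v$ and $bdu=u$ --- and iterating if necessary, until the new pair $(u',v')$ locally trivializes $bd$ and $ac$ on the respective sides. This should deliver the required Urysohn regular function and complete the equivalence.
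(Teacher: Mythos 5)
Your proof of $(2)\Rightarrow(1)$ is correct and is essentially the paper's argument: separate $\{M\}$ and $\{M'\}$, unpack $r(M)=0$ and $r(M')=1$ into homogeneous annihilators $s_1,s_2$, observe $s_1s_2=s_1(rs_2)=(s_1r)s_2=0$, and invoke Corollary \ref{Max}. The direction $(1)\Rightarrow(2)$, however, departs from the paper and has a genuine gap exactly at the step you flag. From $a+b=1$ with $a\in I\cap R_e$, $b\in J\cap R_e$ and $(1-ac)(1-bd)=0$, setting $u=1-ac$, $v=1-bd$, you correctly get $v(P)=0$ on $F$ and $u(P)=0$ on $F'$; but neither $v$ nor $1-u$ is a Urysohn function, since $v(P)=1$ on $F'$ would require $bd$ to be annihilated by some homogeneous element outside $P$, which does not follow. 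Your proposed repair fails concretely: since $uv=0$ gives $acv=v$ and $bdu=u$, replacing $(c,d)$ by $(cv,du)$ yields $u'=1-acv=1-v=bd$ and $v'=1-bdu=1-u=ac$, whose product is $u'v'=abcd=(1-u)(1-v)=1-u-v$, which is not zero in general; iterating reproduces the same pair $(bd,ac)$, so the process stabilizes without ever restoring the identity $u'v'=0$. (A concrete failure already occurs in $R=C([0,1])$ with $a$ a bump supported off $[0,1/3]$ and suitable $c,d$: the resulting $1-bd$ fails to be locally $1$ at boundary points of the zero set of $b$.)

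The paper's route avoids this obstacle entirely: it does not try to build the Urysohn function from one comaximal pair, but instead forms the graded ideals $I=\{r\mid r(P)=0 \text{ for all } P\in F\}$ and $J=\{r\mid r(P)=0 \text{ for all } P\in F'\}$ of \emph{locally vanishing} regular functions and proves $I+J=R$. The comaximality is obtained by contradiction: if $I+J\subseteq M_0$ and $M_0\notin F$, the normality argument of Theorem \ref{Normal} separates $\{M_0\}$ and $F$ by basic opens $D_G(\alpha)$, $D_G(\beta)$ with $\alpha\beta=0$, forcing $\alpha\in I\subseteq M_0$ against $M_0\in D_G(\alpha)$; hence $M_0\in F\cap F'$, a contradiction. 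Then any homogeneous decomposition $1=r+r'$ with $r\in I$, $r'\in J$ is the desired Urysohn function. If you want to keep your comaximal-pair setup, the missing ingredient is precisely to show that the pair $(c,d)$ can be chosen so that $ac$ and $bd$ vanish \emph{locally} on $V_G(I)$ and $V_G(J)$ respectively; proving that amounts to the same separation argument, so the cleanest fix is to adopt the paper's ideal-theoretic formulation.
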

\begin{proof}
$(1)\Rightarrow (2)$. Let $F$ and $F'$ be a disjoint closed subsets of $G\mathrm{Spec}(R)$. Set 
\begin{eqnarray*}
 I&=& \{ r\in R\ | \forall  P\in F, \exists a_P\in h(R)\setminus P \text{ such that } ra_P=0 \}\\ &=&\{r\in R\ | r(P)=0 \text{ for all } P\in F\ \}
 \end{eqnarray*}
 and $J= \{r\in R\ | r(P)=0 \text{ for all } P\in F'\ \}$. Then $I$ and $J$ are a graded ideals of $R$. We show that $I+J=R$. Assume to the converse that $I+J\ne R$. Then $I+J\subseteq M_0$ for some graded maximal ideal $M_0$. That is $I\subseteq M_0$ and $J\subseteq M_0$. \par 
 If $M_0\not\in F$, then $\{M_0\}$ and $F$ are a disjoint closed subsets of $G\mathrm{Spec}R$, as in the proof of  Theorem \ref{Normal}, there are $\alpha,\beta\in h(R)$ such that $\{M_0\}\subseteq D_G(\alpha)$ and $F\subseteq D_G(\beta)$ with $\alpha\beta=0$. We see that $\beta \in h(R)\setminus P$ for all $P\in F$ and $\alpha\beta =0$, that is $\alpha(P)=0$ for all $P\in F $. Thus $\alpha\in I\subseteq M_0$, hence $\alpha\in M_0$, which is not compatible with that fact that $M_0\in D_G(\alpha)$. It follows that $M_0\in F$. By the same way $M_0\in F'$. Which is a contradiction with the fact that $F$ and $F'$ are disjoint. It follows that $I+J=R$.\par 
 Now, let $r\in I $ and $r'\in J$ with $1=r+r'$. Since $I$ and $J$ are graded ideals of $R$, by taking the homogeneous components, one can assume that $r$ and $r'$ are homogeneous elements. Since $r\in I$, we have $r(P)=0$ for all $P\in F$. Also $r'(P)=0$ for all $P\in F'$, so $r(P)=1-r'(P)=1$ for all $P\in F'$.\\
 $(2)\Rightarrow (1)$. Let $M\ne M'\in G\mathrm{Max}(R)$. Then $\{M\}$ and $\{M'\}$ are disjoint closed subsets of $G\mathrm{Spec}(R)$. There exists  $r\in h(R)$ such that $r(M)=0$ and $r(M')=1$. We see that $rs=0$ for some $s\in h(R)\setminus M$ and $(r-1)t=0$ for some $t\in h(R)\setminus M'$. Now, observe that $st=0$. By the Corollary \ref{Max}, $R$ is a Gelfand graded ring.    
\end{proof}
The next result is an immediate consequence of the previous Theorem, that characterize Gelfand commutative ring in terms of regular functions.
\begin{corollary}
Let $R$ be a commutative ring. The following statements are equivalent.
 \begin{enumerate}
 \item $R$ is a Gelfand ring.
 \item Any two closed subsets of $\mathrm{Spec}(R)$ can be separated by a regular function.
\end{enumerate}  
\end{corollary}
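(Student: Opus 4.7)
The plan is to deduce this corollary directly from the preceding theorem by specializing to the trivial grading. Concretely, I would take $G=\{e\}$ to be the trivial group and view $R$ as a $G$-graded ring with $R_e=R$. Under this trivial grading every ideal of $R$ is a $G$-graded ideal, every element of $R$ is homogeneous (so $h(R)=R$), and therefore $G\mathrm{Spec}(R)=\mathrm{Spec}(R)$ as sets with the Zariski topology. Moreover, a homogeneous prime ideal is just a prime ideal, and a $G$-graded maximal ideal is just a maximal ideal, so $R$ is a Gelfand $G$-graded ring in the sense of the paper if and only if $R$ is a Gelfand ring in the classical sense.

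Next, I would check that the notion of regular function and Urysohn separation for $G\mathrm{Spec}(R)$ reduces to the classical notion on $\mathrm{Spec}(R)$ when $G$ is trivial. Indeed, the canonical image $r(P)$ of $r\in R$ in $R_P$ is defined using a denominator $a\in h(R)\setminus P = R\setminus P$, which is precisely the usual localization. Hence two subsets $F, F'$ of $\mathrm{Spec}(R)$ are separated by a regular function in the classical sense exactly when they are separated by a regular function in the graded sense used in the previous theorem.

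With these identifications the equivalence $(1)\Leftrightarrow (2)$ is an immediate application of the preceding theorem: $R$ is a Gelfand ring iff $R$ is a Gelfand $\{e\}$-graded ring iff any two disjoint closed subsets of $G\mathrm{Spec}(R)=\mathrm{Spec}(R)$ can be separated by a regular function. There is no real obstacle here; the only thing one must verify carefully is that all the ingredients (ideals, primes, maximals, localizations, Zariski topology, regular functions) coincide with their classical counterparts under the trivial grading, which is routine. The corollary then follows without further argument.
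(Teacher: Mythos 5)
Your proposal is correct and matches the paper's own proof, which likewise specializes the preceding theorem to the trivial grading $G=\{e\}$, noting that $G\mathrm{Spec}(R)=\mathrm{Spec}(R)$ and $h(R)=R$. Your write-up simply spells out the routine identifications in more detail than the paper does.
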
 
\begin{proof}
Follows from the fact that, if $R$ is a commutative ring then $R$ is naturally a $G$-graded commutative ring where $G=\{e\}$ is the trivial group. Moreover $G\mathrm{Spec}(R)=\mathrm{Spec}(R)$ and $h(R)=R$.
\end{proof}
Next we establish an algebraic criterion of Gelfand graded commutative rings.
\begin{theorem}\label{comax}
Let $R$ be a $G$-graded commutative ring. The following statements are equivalent.
\begin{enumerate}
\item $R$ is a Gelfand graded ring.
\item If $a\in R_e$, then there exists elements $b,c\in R_e$ such that $$(1-ba)(1-ca')=0$$ where $a'=1-a$. 
\end{enumerate}
\end{theorem}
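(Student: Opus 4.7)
The plan is to prove the two directions separately: $(2)\Rightarrow(1)$ is a short algebraic argument, while $(1)\Rightarrow(2)$ requires invoking Theorem \ref{Disjoint} and then carefully extracting $e$-components.

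For $(2)\Rightarrow(1)$, suppose toward a contradiction that some $P\in G\mathrm{Spec}(R)$ lies in two distinct graded maximal ideals $M\ne M'$. Then $M+M'$ is a graded ideal properly containing $M$, so by graded maximality $M+M'=R$, and we may write $1=m+m'$ with $m\in M$ and $m'\in M'$. Taking $e$-components and using that $M,M'$ are graded gives $a:=m_e\in M\cap R_e$ and $a':=1-a=m'_e\in M'\cap R_e$. Applying the hypothesis (2) to this $a$ produces $b,c\in R_e$ with $(1-ba)(1-ca')=0\in P$. Since $P$ is prime, either $1-ba\in P\subseteq M$ or $1-ca'\in P\subseteq M'$; the first case together with $ba\in M$ forces $1\in M$, and the second together with $ca'\in M'$ forces $1\in M'$. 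Either outcome contradicts the properness of $M,M'$, so $R$ must be Gelfand graded.

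For $(1)\Rightarrow(2)$, fix $a\in R_e$ and put $a'=1-a\in R_e$. The subsets $F:=V_G(a)\cap G\mathrm{Max}(R)$ and $F':=V_G(a')\cap G\mathrm{Max}(R)$ of $G\mathrm{Max}(R)$ are closed and disjoint, since $a+a'=1$ prevents any graded maximal from containing both $a$ and $a'$. Theorem \ref{Disjoint} then supplies $r,s\in h(R)$ with $rs=0$, $r\notin\bigcup_{M\in F}M$, and $s\notin\bigcup_{M'\in F'}M'$. The graded ideal $(a)+(r)$ can lie in no graded maximal $M$: if $a\in M$ then $M\in F$ and hence $r\notin M$, while if $a\notin M$ then already $(a)\not\subseteq M$. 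Therefore $(a)+(r)=R$, so $1=\alpha a+\beta r$ for some $\alpha,\beta\in R$. Writing $r\in R_g$ and equating $e$-components gives $1=\alpha_e a+\beta_{g^{-1}}r$, hence $1-\alpha_e a=\beta_{g^{-1}}r\in (r)$. The same reasoning applied with $a'$ and $s$ produces $\alpha',\gamma\in R$ with $1-\alpha'_e a'=\gamma s\in (s)$. Multiplying these two identities and using $rs=0$ collapses the right-hand side to zero, giving $(1-\alpha_e a)(1-\alpha'_e a')=0$. Setting $b:=\alpha_e$ and $c:=\alpha'_e$ (both in $R_e$) yields the desired relation.

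The delicate step is the second direction: Theorem \ref{Disjoint} delivers $r,s$ of arbitrary degree, and the coefficients $\alpha,\beta$ obtained from $(a)+(r)=R$ are a priori inhomogeneous. What makes the argument work is that the $e$-component identity $1-\alpha_e a=\beta_{g^{-1}}r$ still presents $1-\alpha_e a$ as a concrete multiple of $r$; consequently, the two such identities combine so that $rs=0$ annihilates the product, returning a relation that lives entirely inside $R_e$.
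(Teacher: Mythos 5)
Your proof is correct. The $(2)\Rightarrow(1)$ direction is essentially the paper's: the paper routes it through Corollary \ref{Max}, while you inline that corollary's argument directly at the prime $P$; in both cases the key point is that $1-ba$ and $1-ca'$ lie in $R_e$, hence are homogeneous, so graded primality of $P$ applies to their product. The $(1)\Rightarrow(2)$ direction, however, takes a genuinely different route. The paper invokes Theorem \ref{Normal} to separate $V_G(a)$ and $V_G(a')$ by disjoint open sets $D_G(I)$ and $D_G(J)$; this only yields that the product $i_0 j_0$ of the relevant $e$-components is \emph{nilpotent}, so the paper must raise $(1-\alpha_0 a)(1-\beta_0 a')$ to a power $d$ and appeal to the binomial formula to recover expressions of the form $1-ba$ and $1-ca'$. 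You instead apply Theorem \ref{Disjoint} to the disjoint closed subsets $V_G(a)\cap G\mathrm{Max}(R)$ and $V_G(a')\cap G\mathrm{Max}(R)$ of $G\mathrm{Max}(R)$, obtaining homogeneous $r,s$ with $rs=0$ \emph{exactly}; the comaximality $(a)+(r)=R$ (justified correctly, since a proper graded ideal sits inside a graded maximal ideal) and the degree bookkeeping $1-\alpha_e a=\beta_{g^{-1}}r$, $1-\alpha'_e a'=\gamma s$ then give $(1-\alpha_e a)(1-\alpha'_e a')=\beta_{g^{-1}}\gamma\, rs=0$ on the nose. Your version is shorter, needs no nilpotence or binomial manipulation, and sidesteps the paper's slightly glossed claim that disjointness of $D_G(I)$ and $D_G(J)$ forces homogeneous elements of $IJ$ to be nilpotent. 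Both arguments are valid.
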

\begin{proof}
$(1)\Rightarrow (2)$. Let $a\in R_e$ and $a'=1-a$. We see that $V_G(a)\cap V_G(a')=\emptyset$ that is $V_G(a)$ and $V_G(a')$ are disjoint closed subsets of $G\mathrm{Spec}(R)$. Since $R$ is Gelfand graded ring, by  Theorem \ref{Normal}, $G\mathrm{Spec}(R)$ is a normal space. There are disjoint open subsets $U$ and $V$ of $G\mathrm{Spec}(R)$ such that $V_G(a)\subseteq U$ and $V_G(a')\subseteq V$. Now, write $U=D_G(I)$ and $V=D_G(J)$ where $I$ and $J$ are homogeneous ideals of $R$. The condition $V_G(a)\subseteq D_G(I)$ implies that $V_G(a)\cap V_G(I)=\emptyset$, so that $(a)+I=R$. It follows that $1=\alpha a+i$ where $\alpha\in R$ and $i\in I$. Taking the $e$-components, we get $1=\alpha_0a+i_0$ where $\alpha_0\in R_e$ and $i_0\in I\cap R_e$. In the same way $1=\beta_0a'+j_0$ where $\beta_0\in R_e$ and $j_0\in J\cap R_e$. Since $U$ and $V$ are disjoint, every homogeneous element of $IJ$ is nilpotent. So that $i_0j_0$ is nilpotent, that is $(i_0j_0)^d=0$ for some $d\in \Bbb N$. This implies that $(1-\alpha_0a)^d(1-\beta_0a')^d=0$. By applying the binomial formula $(1-\alpha_0a)^d=1-d\alpha_0a+\ldots=1-ba$ where $b\in R_e$, also $(1-\beta_0a')=1-ca'$ where $c\in R_e$.Therefore $(1-ba)(1-ca')=0$. \par 
$(2)\Rightarrow (1)$. Let $M\ne M'$ be a $G$-graded maximal ideals of $R$. Since $M+M'=R$, $1=a+a'$ where $a\in M$ and $a'\in M'$. By taking the $e$-component we may assume that $a$ and $a'$ are in $R_e$. By hypothesis there are elements $b,c\in R_e$ such that $(1-ba)(1-ca')=0$. Now, observe that $1-ba\not\in M$ and $1-ca'\not\in M'$. By Corollary \ref{Max}, $R$ is a Gelfand $G$-graded ring. 
\end{proof}
The following Corollary is an immediate consequence of the previous Theorem, that characterize Gelfand graded ring in terme of its  subring $R_e$.  
\begin{corollary}
Let $R$ be a $G$-graded commutative ring. Then $R$ is a Gelfand graded ring if and only if $R_e$ is a Gelfand ring.
\end{corollary}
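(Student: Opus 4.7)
The plan is to deduce this corollary directly from Theorem \ref{comax}, combined with the algebraic criterion for Gelfand rings listed as item $(6)$ of Theorem~1 in the introduction, applied to the degree-$e$ subring $R_e$.

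First I would observe that $1 \in R_e$, since the identity is homogeneous of degree $e$ in any $G$-graded ring. Therefore, for any $a \in R_e$, the element $a' := 1-a$ also lies in $R_e$, and conversely any pair $a, a' \in R_e$ satisfying $a + a' = 1$ arises in this way from a single $a \in R_e$. This makes the two conditions comparable.

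Next I would write down the two equivalences side by side. On one hand, by Theorem \ref{comax}, $R$ is a Gelfand graded ring if and only if for every $a \in R_e$ there exist $b, c \in R_e$ such that
\[
(1-ba)(1-ca') = 0, \qquad a' = 1-a.
\]
On the other hand, applying the Gelfand characterization from Theorem~1$(6)$ to the commutative ring $R_e$, the ring $R_e$ is a Gelfand ring if and only if, whenever $a, a' \in R_e$ satisfy $a + a' = 1$, there exist $b, b' \in R_e$ such that $(1-ab)(1-a'b') = 0$. By the observation of the previous paragraph and the commutativity of $R_e$, these two statements are literally the same condition on $R_e$, so the required equivalence follows at once.

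There is essentially no obstacle here; the corollary is a bookkeeping exercise once one has Theorem \ref{comax} in hand. The only point requiring any care is noting that the witnesses $b, c$ provided by Theorem \ref{comax} already lie in $R_e$ (which is explicit in its statement), so that the criterion it gives really is the Gelfand criterion intrinsic to the subring $R_e$ rather than a weaker statement involving elements of the full ring $R$.
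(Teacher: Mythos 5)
Your proposal is correct and is exactly the argument the paper intends: the paper's own proof reads ``This follows immediately from the previous characterization,'' i.e.\ from Theorem \ref{comax} combined with the standard element-wise criterion for Gelfand rings applied to $R_e$. Your additional remarks (that $1\in R_e$, so $a'=1-a\in R_e$, and that the witnesses $b,c$ already lie in $R_e$) correctly fill in the routine details the paper leaves implicit.
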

\begin{proof}
This follows immediately from the previous characterization.
\end{proof}
\begin{example}
\begin{enumerate}
\item Let $R$ be a commutative ring and consider $R[X]$ as a $\Bbb Z$-graded ring. Then $R[X]$ is a Gelfand $\Bbb Z$-graded ring if and only if $R$ is a Gelgand ring.
\item Let $A$ be a commutative ring and $M$ be an $A$-module. Let $R:=A\propto M$ be the trivial ring extension of $A$ by $M$. Then $R$ is a $\Bbb Z_2$-graded ring with $R_0=A$ and $R_1=M$, moreover  the homogeneous prime spectrum of $R$ and the prime spectrum of $R$ are the same, see \cite{Aqa}. It follows that $R$ is a Gelfand ring if and only if $A$ is a Gelfand ring. 
\end{enumerate}
\end{example}
\section{ pm$^+$ graded  rings}
In this section we introduce and study a class of graded rings that behave as a graded rings with a Gelfand strong property, called pm$^+$ graded ring.
\begin{definition}
Let $R$ be a $G$-graded commutative ring. Then $R$ is called a pm$^+$ graded ring if for each $P\in G\mathrm{Spec}(R)$, the homogeneous prime ideals of $R$ containing $P$ form a chain (i.e linearly ordered). 
\end{definition}
\begin{example}
\begin{enumerate}
\item Valuation graded ring is a pm$^+$ graded ring.
\item If $h\dim R=0$, then $R$  is a pm$^+$ graded ring.
\end{enumerate}
\end{example}
\begin{proposition}\label{QotLoc}
Let $R$ be a pm$^+$ graded commutative ring.
\begin{enumerate}
\item $R$ is a Gelfand graded ring.
\item If $S\subseteq h(R)$ is a multiplicatively closed subset, then $R_S$ is a pm$^+$ graded ring.
\item If $I$ is a graded ideal of $R$, then $R/I$ is a pm$^+$ graded ring.
\end{enumerate}
\end{proposition}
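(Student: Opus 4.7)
For part (1), the plan is to use the fact that a totally ordered set has at most one maximal element. Let $P \in G\mathrm{Spec}(R)$. By Zorn's Lemma (as noted in the introduction) $P$ is contained in at least one graded maximal ideal. If $M_1, M_2 \in G\mathrm{Max}(R)$ both contain $P$, then both are homogeneous prime ideals containing $P$, so by the pm$^+$ hypothesis they are comparable; say $M_1 \subseteq M_2$. Since $M_1$ is graded maximal and $M_2 \neq R$ is graded, the inclusion forces $M_1 = M_2$.

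For parts (2) and (3), the strategy is to invoke the standard bijective correspondence between homogeneous prime ideals upstairs and downstairs, which preserves inclusions, and to transport the chain property through this bijection. For (2), homogeneous primes of $R_S = S^{-1}R$ correspond via $Q' \mapsto Q' \cap R$ to homogeneous primes $Q$ of $R$ with $Q \cap S = \emptyset$, and this correspondence preserves inclusions. Given $P' \in G\mathrm{Spec}(R_S)$ with preimage $P \in G\mathrm{Spec}(R)$, the homogeneous primes of $R_S$ containing $P'$ correspond to a subset of the homogeneous primes of $R$ containing $P$; since the latter form a chain, so does the former. For (3), the correspondence $\bar Q \leftrightarrow Q$ between homogeneous primes of $R/I$ and homogeneous primes of $R$ containing $I$ likewise preserves inclusions, so for $\bar P \in G\mathrm{Spec}(R/I)$ corresponding to $P \supseteq I$, the homogeneous primes of $R/I$ containing $\bar P$ correspond precisely to the homogeneous primes of $R$ containing $P$, which form a chain by hypothesis.

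There is no real obstacle here; the only subtlety worth mentioning is that one should cite or at least recall the graded analogues of the prime-correspondence theorems for localization at a homogeneous multiplicative set and for quotient by a graded ideal. These are standard and were implicitly introduced in the opening section of the paper where $S^{-1}R$ and $R/I$ are put into their natural $G$-gradings, so the argument amounts to observing that the classical bijections restrict to the homogeneous primes and respect containment.
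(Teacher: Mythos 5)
Your proof is correct and follows the same route the paper takes: part (1) from comparability of two graded maximal ideals over a common homogeneous prime, and parts (2) and (3) from the order-preserving correspondences for localization and quotient (which is exactly the justification the paper gives for (2), while labelling (1) and (3) ``immediate''). You have simply written out the details the paper leaves to the reader.
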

\begin{proof}
\begin{enumerate}
\item Immediate.
\item This follows from the fact that the set of homogeneous  prime ideals of $R_S$ is in bijection (order preserving) with the set of homogeneous  prime ideals of $R$ disjoint with $S$. 
\item Immediate.
\end{enumerate}
\end{proof}
The following is a simple algebraic characterization of pm$^+$ graded commutative ring.
\begin{proposition}\label{pmbyelement}
Let $R$ be a $G$-graded commutative ring. The following statements are equivalent.
\begin{enumerate}
\item $R$ is a pm$^+$ graded ring.
\item If $P,Q\in G\mathrm{Spec}(R)$ are not comparable, then there are $r,r'\in h(R)$ such that $rr'=0$ and $r\not\in P$, $r'\not\in Q$. 
\end{enumerate}
\end{proposition}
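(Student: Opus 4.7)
My plan is to prove both implications by contrapositive, in each case invoking the graded Krull lemma (the existence of a homogeneous prime ideal disjoint from a multiplicatively closed subset of $h(R)$, as used in the proof of Proposition \ref{ExistsPrime}).

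For the direction $(2) \Rightarrow (1)$, I would fix $W \in G\mathrm{Spec}(R)$ and take two homogeneous primes $P, Q$ containing $W$. If $P$ and $Q$ were incomparable, then (2) would yield $r, r' \in h(R)$ with $rr' = 0$, $r \notin P$, and $r' \notin Q$. Since $W \subseteq P$ and $W \subseteq Q$, neither $r$ nor $r'$ lies in $W$; but $rr' = 0 \in W$ then contradicts $W$ being a homogeneous prime. Hence any two homogeneous primes above $W$ are comparable, so they form a chain.

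For the direction $(1) \Rightarrow (2)$, I would let $P, Q \in G\mathrm{Spec}(R)$ be incomparable and assume, toward a contradiction, that $rr' \neq 0$ for every $r \in h(R)\setminus P$ and $r' \in h(R)\setminus Q$. Set
\[
T = \{\,xy \mid x \in h(R)\setminus P,\ y \in h(R)\setminus Q\,\}.
\]
Then $T \subseteq h(R)$, $1 \in T$, and $0 \notin T$ by assumption; moreover $T$ is multiplicatively closed because $P$ and $Q$ are homogeneous primes, so the homogeneous product of elements outside each of them remains outside each of them. Applying Zorn's lemma as in the proof of Proposition \ref{ExistsPrime}, I obtain a homogeneous prime $W$ with $W \cap T = \emptyset$. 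For any $r \in h(R)\setminus P$, the identity $r = r \cdot 1 \in T$ forces $r \notin W$, so $W \cap h(R) \subseteq P$, hence $W \subseteq P$ since $W$ is graded; symmetrically $W \subseteq Q$. But then $P$ and $Q$ both belong to the chain of homogeneous primes above $W$ guaranteed by (1), contradicting their incomparability.

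The main obstacle is conceptual rather than computational: one must recognize that the correct multiplicatively closed set to feed into the graded Krull lemma is the product $T = (h(R)\setminus P)(h(R)\setminus Q)$ rather than either factor individually. This is precisely what guarantees the resulting prime $W$ lies below both $P$ and $Q$ simultaneously, so that the pm$^+$ hypothesis can be applied at $W$ and the chain condition activates. Once $T$ is identified, verifying multiplicative closure in $h(R)$, the non-vanishing of $0$, and the symmetric containment $W \subseteq P \cap Q$ are all routine consequences of the primality and graded nature of $P$ and $Q$.
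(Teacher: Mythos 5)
Your proof is correct. Both directions work: in $(2)\Rightarrow(1)$ your contradiction at $W$ is exactly the paper's argument, and in $(1)\Rightarrow(2)$ your set $T=(h(R)\setminus P)(h(R)\setminus Q)$ is indeed multiplicatively closed, contained in $h(R)$, and avoids $0$ under your hypothesis, so the graded Krull lemma produces a homogeneous prime $W$ with $W\subseteq P$ and $W\subseteq Q$, contradicting pm$^+$.

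The route differs from the paper's only in packaging. The paper derives $(1)\Rightarrow(2)$ by applying the contrapositive of its Proposition \ref{ExistsPrime} to $I=(0)$ and the single proper graded ideal $P\cap Q$: since pm$^+$ forbids any homogeneous prime inside $P\cap Q$, that proposition's hypothesis must fail, yielding $r,r'\in h(R)$ with $rr'=0$ and $r,r'\notin P\cap Q$, after which a short case analysis (using $rr'=0\in P$ and $rr'=0\in Q$) sorts $r$ and $r'$ so that one misses $P$ and the other misses $Q$. You instead go back to the underlying Zorn/Krull lemma with the tailored multiplicative set $T$, which lands the prime $W$ below both $P$ and $Q$ in one step and delivers the elements $r\notin P$, $r'\notin Q$ without the sorting step. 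Your version is marginally more self-contained but re-proves a special case of machinery the paper already has; the paper's version reuses Proposition \ref{ExistsPrime} uniformly, as it does in Theorem \ref{Disjoint} and elsewhere. Both rest on the same graded prime-avoidance principle, so nothing substantive is gained or lost either way.
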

\begin{proof}
$(1)\Rightarrow (2)$. If $P,Q\in G\mathrm{Spec}(R)$ are not comparable, then there is no homogeneous prime ideal contained in $P\cap Q$ since $R$ is a pm$^+$ graded ring. By \ref{ExistsPrime}, there are elements $r,r'\in h(R)$ such that $rr'=0$ and $r,r'\not\in P\cap Q$. So, necessarily one is not in  $P$ and the other is not in $Q$. \par 
$(2)\Rightarrow (1)$. Let $P_0\in G\mathrm{Spec}(R)$ and $P,Q\in G\mathrm{Spec}(R)$ containing $P_0$. If $r,r'\in h(R)$ with $rr'=0$, then $rr'\in P_0\subseteq P\cap Q$, so that $r\in P\cap Q$ or $r'\in P\cap Q$. It follows from the hypothesis that $P$ and $Q$ are comparable. 
\end{proof}
\begin{proposition}
Let $R$ be a $G$-graded commutative ring. The following statements are equivalent.
\begin{enumerate}
\item $R$ is a pm$^+$ graded ring.
\item For every minimal homogeneous prime ideal $m$ of $R$, the homogeneous prime ideals of $R/m$ are linearly ordered.
\end{enumerate}
\end{proposition}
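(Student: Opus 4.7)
The plan is to treat the two implications separately, using only the order-preserving bijection between homogeneous primes of $R/m$ and homogeneous primes of $R$ containing $m$ (implicit in Proposition \ref{QotLoc}(3)) together with a Zorn's Lemma argument to produce a minimal homogeneous prime inside any given homogeneous prime.

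For $(1) \Rightarrow (2)$, I would fix a minimal homogeneous prime $m$ of $R$ and note that the homogeneous primes of $R/m$ correspond bijectively, in an order-preserving way, to the homogeneous primes of $R$ containing $m$. Since $R$ is pm$^+$, the homogeneous primes containing $m$ already form a chain by definition, so the chain property transfers to $R/m$.

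For $(2) \Rightarrow (1)$, I would take any $P \in G\mathrm{Spec}(R)$ and two homogeneous primes $Q_1, Q_2$ containing $P$, and show they are comparable. The key preliminary step is to find a minimal homogeneous prime $m$ of $R$ with $m \subseteq P$: apply Zorn's Lemma to the set of homogeneous primes contained in $P$ ordered by reverse inclusion, using the fact that the intersection of a descending chain of homogeneous primes is again a homogeneous prime (a graded ideal, and prime because primality is preserved under intersection of a chain). Then $m \subseteq P \subseteq Q_1, Q_2$, so $Q_1/m$ and $Q_2/m$ are homogeneous primes of $R/m$ and are therefore comparable by hypothesis (2); pulling back along the quotient gives $Q_1 \subseteq Q_2$ or $Q_2 \subseteq Q_1$.

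The only subtlety, and the step I expect to be flagged as needing justification, is the existence of a minimal homogeneous prime below $P$. This is standard in the graded setting but should be stated explicitly: one checks that an intersection of a chain of homogeneous prime ideals is again homogeneous (being a direct sum of its $g$-components, since each $R_g$ commutes with arbitrary intersections) and prime (by the usual chain-of-primes argument restricted to homogeneous elements, using the definition of homogeneous prime). Everything else is a direct transfer through the correspondence in Proposition \ref{QotLoc}, so the proof should be short.
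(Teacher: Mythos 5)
Your proof is correct and follows essentially the same route as the paper: pass through the order-preserving correspondence between homogeneous primes of $R/m$ and homogeneous primes of $R$ containing $m$, and for $(2)\Rightarrow(1)$ descend to a minimal homogeneous prime below the given $P$. The only difference is that the paper simply asserts the existence of such a minimal homogeneous prime, whereas you supply the (correct) Zorn's Lemma justification via intersections of descending chains of homogeneous primes.
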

\begin{proof}
$(1)\Rightarrow (2)$. No comment.\par 
$(2)\Rightarrow (1)$. Let $P\in G\mathrm{Spec}(R)$, then $P$ contain a minimal homogeneous prime ideal $m$. Since the set of homogeneous prime ideals of $R$ containing $P$ is a closed subset of $R/m$, it is linearly ordered. Thus $R$ is pm$^+$ graded ring.  
\end{proof}
For $P\in G\mathrm{Spec}(R)$, denote $$C(P)=\{Q \in G\mathrm{Spec}(R)\ | P\subseteq Q \text{ or } Q\subseteq P \}$$ Then $C(P)$ is the set of all homogeneous prime ideals of $R$ that are comparable with $P$, note that, if $P$ is a graded maximal ideal of $R$, then $C(P)=\{ Q\in \mathrm{Spec}(R)\ | Q\subseteq P\}$. The following result is a topological characterization of pm$^+$ graded ring in terms of the subsets $C(P)$.
\begin{theorem}
Let $R$ be a $G$-graded commutative ring. The following statements are equivalent.
\begin{enumerate}
\item $R$ is a pm$^+$ graded ring.
\item For every $P\in G\mathrm{Spec}(R)$, $C(P)$ is a closed subset of $G\mathrm{Spec}(R)$.
\end{enumerate}
\end{theorem}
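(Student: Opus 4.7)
For both directions my plan is to route through Proposition~\ref{pmbyelement}, which characterizes pm$^+$ by the existence of a homogeneous zero-divisor pair separating any two non-comparable graded primes.

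For $(1) \Rightarrow (2)$, I would show $C(P)^c$ is open by producing a basic open neighborhood of each of its points. Given $Q_0 \in C(P)^c$, the primes $P$ and $Q_0$ are incomparable, so Proposition~\ref{pmbyelement} supplies $r, r' \in h(R)$ with $rr' = 0$, $r \notin P$ and $r' \notin Q_0$; and since $P \not\subseteq Q_0$ while $P$ is a graded ideal, I can pick a homogeneous $s \in P \setminus Q_0$. The candidate neighborhood is $U := D_G(r's) = D_G(r') \cap D_G(s)$, which contains $Q_0$. For any $Q_1 \in U$, the condition $r' \notin Q_1$ together with $rr' = 0$ and primality forces $r \in Q_1$, whence $Q_1 \not\subseteq P$; while $s \notin Q_1$ combined with $s \in P$ yields $P \not\subseteq Q_1$. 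Thus $Q_1$ is incomparable with $P$, so $U \subseteq C(P)^c$.

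For $(2) \Rightarrow (1)$, I would directly verify the criterion of Proposition~\ref{pmbyelement}. Let $P, Q$ be non-comparable graded primes. Then $Q \in C(P)^c$, which is open by hypothesis, so some $f \in h(R)$ satisfies $Q \in D_G(f) \subseteq C(P)^c$. Contrapositively, every graded prime comparable with $P$ — in particular every graded prime $Q' \subseteq P$ — contains $f$. I then consider the multiplicatively closed set $S := \{\, t f^n \mid t \in h(R) \setminus P,\ n \geq 0 \,\} \subseteq h(R)$. If $0 \notin S$, the graded Zorn argument used in the proof of Proposition~\ref{ExistsPrime} yields a graded prime $Q^*$ disjoint from $S$, which forces $Q^* \subseteq P$ and $f \notin Q^*$, a contradiction. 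Hence $0 = t f^n$ for some $t \in h(R) \setminus P$ and $n \geq 1$ (the case $n = 0$ is excluded since it would give $t = 0 \in P$). Setting $r := t$ and $r' := f^n$, both homogeneous, one has $rr' = 0$, $r \notin P$, and $r' \notin Q$ (since $f \notin Q$ and $Q$ is prime), which is exactly what Proposition~\ref{pmbyelement} demands.

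The main obstacle is the $(1) \Rightarrow (2)$ direction: the zero-divisor pair supplied by Proposition~\ref{pmbyelement} controls only the failure $Q_1 \not\subseteq P$, so $D_G(r')$ may still intersect $V_G(P) \subseteq C(P)$. Introducing the auxiliary element $s$ is the key trick to simultaneously block $P \subseteq Q_1$, and this step crucially exploits that $P$ is a graded ideal, so the witness $s \in P \setminus Q_0$ can be chosen homogeneous.
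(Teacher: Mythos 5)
Your proof is correct, but it follows a genuinely different route from the paper in both directions. For $(1)\Rightarrow(2)$ the paper does not work with the complement at all: it exhibits $C(P)$ directly as the variety $V_G(I)$ of the graded ideal $I=\bigcap_{Q\subseteq P}Q$, verifying the reverse inclusion by checking that any $Q'\in V_G(I)$ satisfies the hypothesis of Proposition~\ref{ExistsPrime} for the ideal $P\cap Q'$, producing a graded prime below both $P$ and $Q'$ and then invoking the pm$^+$ property. Your argument instead covers the complement by basic opens, using the zero-divisor criterion of Proposition~\ref{pmbyelement} together with the auxiliary homogeneous witness $s\in P\setminus Q_0$ (whose existence does require, as you note, that $P$ is graded); this is a clean local argument that avoids identifying the defining ideal of $C(P)$. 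For $(2)\Rightarrow(1)$ the paper's proof is a two-line specialization argument: if $Q,Q'\supseteq P$ then $P\in C(Q)$, and since $C(Q)$ is closed it contains $\overline{\{P\}}=V_G(P)\ni Q'$, so $Q$ and $Q'$ are comparable. Your version, which extracts a basic open $D_G(f)\subseteq C(P)^c$ and runs a multiplicative-set argument with $S=\{tf^n\}$ to manufacture the homogeneous zero-divisor pair demanded by Proposition~\ref{pmbyelement}, is valid (the key steps --- $h(R)\setminus P\subseteq S$ forcing $Q^*\subseteq P$, and $f\in Q^*$ by comparability --- all check out), but it is considerably heavier than necessary; the closure-under-specialization observation gives the implication immediately and is worth knowing as the standard trick for such statements.
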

\begin{proof}
$(1)\Rightarrow (2)$. Pick $P\in G\mathrm{Spec}(R)$ and set $I=\cap_{Q\subseteq P} Q$ where $Q$ runs through homogeneous prime ideals of $R$ contained in $P$. We show that $C(P)=V_G(I)$. It is easy to see $C(P)\subseteq V(I)$. Now, let $Q'\in V_G(I)$. Let $r,r'\in h(R)$ such that $rr'=0$. We will show that $r$ or $r'$ is in $P\cap Q'$. So, we may assume that $r\not\in P\cap Q'$. Hence $r\not\in P$ or $r\not\in Q'$.\\
\textbf{First case ;} $r\not\in Q'$. In this case $r'\in Q'$. Observe that $r\not\in I$ since $I\subseteq Q'$. It follows that $r\not\in Q$ for some $Q\subseteq P$, thus $r'\in Q$ since $rr'=0\in Q$. Therefore  $r'\in P$. Thus $r'\in P\cap Q'$.\\
\textbf{Second case ;} $r \not\in P$. In this case $r'\in P$. We see that $r\not\in Q$ for all $Q\subseteq P$, so that $r'\in Q$ for all $Q\subseteq P$ since $rr'=0\in Q$. It follows that $r'\in I=\cap_{Q\subseteq P}Q$, therefore $r'\in Q'$. Hence $r'\in P\cap Q'$.\\
Now, by Lemma \ref{ExistsPrime}, there exists a homogeneous prime ideal $P'$ of $R$ such that $P'\subseteq P\cap Q'$, that is $P$ and $Q'$ are homogeneous prime ideals of $R$ containing $P'$. Since $R$ is a pm$^+$ graded ring, $P$ and $Q'$ are comparable, that is $Q'\in C(P)$.\par 
$(2)\Rightarrow (1)$. Let $P\in G\mathrm{Spec}(R)$ and $Q,Q'\in G\mathrm{Spec}(R)$ be homogeneous prime ideals of $R$ containing $P$. We have $P\in C(Q)$  and $C(Q)$ is a closed subset of $G\mathrm{Spec}(R)$, so $V_G(P)\subseteq C(Q)$, therefore $Q'\in C(Q)$ since $Q'\in V_G(P)$, this means that $Q$ and $Q'$ are comparable.   
\end{proof}
Next, we introduce  the following conditions in order to realize a relation between pm$^+$ graded ring and Gelfand graded ring:\\
 (Av) If $I,J,J'$ are graded ideals of $R$ such that $h(R)\cap I\subseteq J\cup J'$, then $I\subseteq J$ or $I\subseteq J'$.\\
 (Cm) If $J$ and $J$ are  not comparable graded ideals of $R$, then there exists $g\in G$ and $r,r'\in R_g$ such that $r\in J-J'$ and $r'\in J'-J$.\\
 (PCm) If $Q$ and $Q'$ are  not comparable homogeneous prime ideals of $R$, then there exists $g\in G$ and $r,r'\in R_g$ such that $r\in Q-Q'$ and $r'\in Q'-Q$.
 \begin{proposition}
 Let $R$ be $G$-graded commutative ring. Then the two conditions (Av) and (Cm) are equivalent.
 \end{proposition}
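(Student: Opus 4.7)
The plan is to prove the two implications separately, using that graded ideals $J$ decompose as $J=\bigoplus_g(J\cap R_g)$, and that for a sum $I=J+J'$ of graded ideals the degree-$g$ component satisfies $I\cap R_g=(J\cap R_g)+(J'\cap R_g)$.

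For the direction (Av) $\Rightarrow$ (Cm), I would argue by contrapositive: assume the conclusion of (Cm) fails for a pair $J, J'$, meaning that for every $g\in G$ one cannot simultaneously find $r\in (J\cap R_g)\setminus J'$ and $r'\in (J'\cap R_g)\setminus J$. This says each $g$ lies in $A=\{g : J\cap R_g\subseteq J'\}$ or in $B=\{g : J'\cap R_g\subseteq J\}$, and $A\cup B=G$. Set $I=J+J'$. I would then verify $h(R)\cap I\subseteq J\cup J'$: take $r\in R_g\cap I$ and write $r=y+z$ with $y\in J\cap R_g$, $z\in J'\cap R_g$; if $g\in A$ then $y\in J'$, so $r=y+z\in J'$; if $g\in B$ then $z\in J$, so $r\in J$. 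Applying (Av) gives $I\subseteq J$ or $I\subseteq J'$, i.e. $J'\subseteq J$ or $J\subseteq J'$, forcing $J, J'$ comparable.

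For (Cm) $\Rightarrow$ (Av), suppose $I,J,J'$ are graded ideals with $h(R)\cap I\subseteq J\cup J'$ but $I\not\subseteq J$ and $I\not\subseteq J'$. Using that $I$ is graded, I would extract a homogeneous $s\in I\setminus J$ and a homogeneous $t\in I\setminus J'$; the hypothesis then forces $s\in J'\setminus J$ and $t\in J\setminus J'$. Look at the graded ideals $I\cap J$ and $I\cap J'$: the element $t$ witnesses $I\cap J\not\subseteq I\cap J'$ and $s$ witnesses the reverse, so they are incomparable. Apply (Cm) to obtain $r_0,r'_0\in R_g$ (for a common $g$) with $r_0\in (I\cap J)\setminus(I\cap J')$ and $r'_0\in (I\cap J')\setminus(I\cap J)$; since $r_0\in I$ and $r_0\in J$, the only way $r_0\notin I\cap J'$ is $r_0\notin J'$, and symmetrically $r'_0\notin J$. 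Then $r_0+r'_0\in R_g\cap I$ but it belongs to neither $J$ (as $r'_0\notin J$) nor $J'$ (as $r_0\notin J'$), contradicting $h(R)\cap I\subseteq J\cup J'$.

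The only minor subtlety I expect is verifying the degree-wise decomposition of $J+J'$ and $I\cap J$, $I\cap J'$ (all graded since graded ideals are closed under sums and intersections), and the bookkeeping of "$r\notin J'$ versus $r\notin I\cap J'$"; both are routine once one commits to working degree by degree. No use of homogeneous primality or of Proposition \ref{ExistsPrime} is needed — the equivalence is purely an ideal-theoretic/avoidance statement about graded ideals.
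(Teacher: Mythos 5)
Your proof is correct and follows essentially the same route as the paper: in the direction (Av) $\Rightarrow$ (Cm) you use the degree-wise decomposition of $J+J'$ to get $h(R)\cap(J+J')\subseteq J\cup J'$, and in the converse you apply (Cm) to the graded ideals $I\cap J$ and $I\cap J'$ and derive a contradiction from the homogeneous element $r_0+r'_0\in R_g\cap I$ lying in neither $J$ nor $J'$ --- exactly the paper's argument, merely phrased via contrapositive/contradiction rather than the paper's ``show $K$ and $K'$ are comparable, then conclude.'' No gaps.
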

 \begin{proof}
 Assume that $R$ satisfies the condition (Av). Let $J,J'$ be two graded ideals of $R$ which are not comparable. Assume to the contrary that, for every $g\in G$, $R_g\cap J\subseteq R_g\cap J'$ or $ R_g\cap J'\subseteq R_g\cap J$. Let $x\in R_g\cap (J+J')$ where $g\in G$, then $x=a+b$ where $a\in R_g\cap J$ and $b\in R_g\cap J'$, so $a+b\in R_g\cap J$ or $a+b\in R_g\cap J'$ since $R_g\cap J\subseteq R_g\cap J'$ or $ R_g\cap J'\subseteq R_g\cap J$. It follows that $x\in J\cup J'$. So $h(R)\cap (J+J')\subseteq J\cup J'$. The condition (Av) implies that $J+J'\subseteq J$ or $J+J'\subseteq J$, so that $ J'\subseteq J$ or $J\subseteq J'$, a contradiction.\\
 Assume that $R$ satisfies the condition (Cm). Let $I,J,J'$ be a graded ideals of $R$ such that $h(R)\cap I\subseteq J\cup J' $. Set $K=I\cap J$ and $K'=I\cap J'$, then $K$ and $K'$ are graded ideals of $R$. If $K$ and $K'$ are not comparable, then by the condition (Cm), there are $g\in G$, $r,r'\in R_g$ such that $r\in K-K'$ and $r'\in K'-K$. We see that $r,r'\in R_g\cap I$, so $r+r'\in h(R)\cap I$. Therefore $r+r'\in J$ or $r+r'\in J'$. This implies that $r\in J'$ or $r'\in J$ since $r\in J$ and $r'\in J'$, a contradiction. Thus $K$ and $K'$ are comparable. Without lost of generality, we may assume that $K\subseteq K'$, that is $I\cap J\subseteq I\cap J'$. So that 
 $h(R)\cap I\subseteq (J\cap I)\cup (J'\cup I)\subseteq J'$, hence $I\subseteq J'$. 
 \end{proof}
 \begin{remark}\label{Torsion} If $G$ is a torsion group (e.g. finite group) and $R$ is a $G$-graded commutative ring, then $R$ satisfies the condition (PCm). Indeed, let $Q$, $Q'$ be a not comparable homogeneous prime ideals of $R$. Then there are $a,b\in h(R)$ with $a\in Q-Q'$ and $b\in Q'-Q$. We have $a\in R_g$ and $b\in R_h$ for some $g,h\in G$. Since $G$ is a torsion group, $g^n=e$ and $h^m=e$ for some $n,m\in \Bbb N$. Observe that $r:=a^n$ and $r':=b^m$ are homogeneous elements of $R$ of same degree ($r,r'\in R_e$) such that $r\in Q-Q' $ and $r'\in Q'-Q$. 
 \end{remark}
\begin{theorem}
Let $R$ be a $G$-graded commutative ring. Consider the following statements.
\begin{enumerate}
\item $R$ is a pm$^+$ graded ring.
\item If $S\subseteq h(R)$ is a multiplicatively closed subset of $R$, then $R_S$ is a Gelfand graded ring.
\item If $r\in h(R) $ is a non nilpotent element of $R$, then $R_r$ is a Gelfand graded ring. 
\item If $r\in h(R) $ is a non nilpotent element of $R$, then $D_G(r)$ is a normal space. 
\end{enumerate}
Then $1)\Rightarrow 2) \Rightarrow 3)\Rightarrow 4)$. Moreover, if  $R$ satisfies the condition  (PCm) (in particular if $R$ satisfies that condition (Av)), then $4)\Rightarrow 1)$.
\end{theorem}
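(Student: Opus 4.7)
The first three implications are routine reductions to earlier results. For $(1)\Rightarrow(2)$ I would invoke Proposition \ref{QotLoc}: part (2) gives that $R_S$ is pm$^+$ whenever $S\subseteq h(R)$ is multiplicative, and part (1) then gives that $R_S$ is Gelfand. For $(2)\Rightarrow(3)$ I would take $S=\{1,r,r^2,\ldots\}\subseteq h(R)$, which is multiplicative and avoids $0$ precisely because $r$ is non-nilpotent, so $R_r=R_S$ is Gelfand by (2). For $(3)\Rightarrow(4)$ I would use the standard graded homeomorphism $G\mathrm{Spec}(R_r)\cong D_G(r)$ together with Theorem \ref{Normal} to convert ``$R_r$ is Gelfand'' into ``$D_G(r)$ is normal''.

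The substantive step is $(4)\Rightarrow(1)$ under (PCm), noting that the equivalence (Av)$\Leftrightarrow$(Cm) specialises to (PCm) by restricting the ideals in (Cm) to be prime. I would argue by contradiction: if $R$ is not pm$^+$, then by definition there exists a homogeneous prime $P_0$ admitting two incomparable homogeneous primes $P,Q$ above it. Applying (PCm) to the incomparable pair $(P,Q)$ produces elements $a,a'\in R_g$ of a common degree $g$ with $a\in P\setminus Q$ and $a'\in Q\setminus P$. The key construction is the homogeneous element
\[
s:=a+a'\in R_g.
\]
By construction $s\notin P$ (since $a\in P$ but $a'\notin P$) and symmetrically $s\notin Q$; in particular $s$ is non-nilpotent. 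Hypothesis (4), combined with $G\mathrm{Spec}(R_s)\cong D_G(s)$ and Theorem \ref{Normal}, then makes $R_s$ a Gelfand graded ring.

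To close the argument, I would observe that in $R_s$ the ideals $P_0R_s,\,PR_s,\,QR_s$ are homogeneous primes (none of them meets $\{s^n\}$), with the common containments $P_0R_s\subseteq PR_s$ and $P_0R_s\subseteq QR_s$. Gelfandness of $R_s$ gives that $P_0R_s$ lies under a \emph{unique} graded maximal ideal $M$, and applying the same uniqueness to $PR_s$ and to $QR_s$ forces both of them to be contained in that same $M$. Contracting to $R$ yields a homogeneous prime $M^c:=M\cap R$ with $P+Q\subseteq M^c$ and $s\notin M^c$. The contradiction is then immediate: $a\in P\subseteq M^c$ and $a'\in Q\subseteq M^c$ give $s=a+a'\in M^c$.

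The main obstacle is producing a \emph{homogeneous} test element $s$: a priori the witnesses $a\in P\setminus Q$ and $a'\in Q\setminus P$ exist in $h(R)$ but could sit in different graded components, in which case $a+a'$ is not homogeneous and the graded localisation $R_s$ to which hypothesis (4) applies is unavailable. Assumption (PCm) is exactly the mechanism that places $a$ and $a'$ into a common $R_g$, which is what lets the localisation argument proceed.
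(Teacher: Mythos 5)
Your proof is correct. The reductions $1)\Rightarrow 2)\Rightarrow 3)\Rightarrow 4)$ and the opening of $4)\Rightarrow 1)$ --- using (PCm) to place witnesses $a\in P\setminus Q$, $a'\in Q\setminus P$ in a common component $R_g$ and localizing at the homogeneous non-nilpotent element $s=a+a'$ --- coincide exactly with the paper's argument (the paper calls the element $t=r+r'$). Where you diverge is in how Gelfandness of the localization is exploited. The paper gives a direct proof: it applies the algebraic characterization of Theorem \ref{comax} to the identity $r/t+r'/t=1$ in $(R_t)_e$, clears denominators to manufacture homogeneous elements $u,v$ of equal degree $g^k$ with $uv=0$ and each lying outside one of the two incomparable primes, and then concludes via Proposition \ref{pmbyelement}; this costs some degree bookkeeping. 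You instead argue by contradiction using only the definition of Gelfandness: $P_0R_s$, $PR_s$, $QR_s$ are graded primes of $R_s$ (all disjoint from $\{s^n\}$ since $s\notin P\cup Q$ and $s$ is homogeneous), the unique graded maximal ideal $M$ over $P_0R_s$ must contain both $PR_s$ and $QR_s$, and then $s/1=a/1+a'/1\in M$ contradicts the invertibility of $s$ in $R_s$; your contraction to $M^c=M\cap R$ is a harmless detour (it does require the standard fact that graded maximal ideals are graded prime, which holds), since the contradiction is already visible in $R_s$. Your endgame is shorter and bypasses Theorem \ref{comax} and Proposition \ref{pmbyelement} entirely, at the price of being a proof by contradiction; the paper's version has the merit of exhibiting the explicit annihilating homogeneous pair demanded by its characterization of pm$^+$ graded rings. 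Both arguments are sound.
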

\begin{proof}
$(1)\Rightarrow (2)$.  From \ref{QotLoc}. $(2)\Rightarrow (3)$ Take $S=\{r^n\ / n\in \Bbb N\}$. $(3)\Leftrightarrow (4)$ By Theorem  \ref{Normal} and the fact that $D_G(r)$ and $G\mathrm{Spec}(R_r)$ are homeomorphic.\\
Assume that $R$ satisfies the conditions (PCm) and $(3)$. Let $Q$ and $Q'$ be a not comparable homogeneous prime ideals of $R$. The condition (PCm) implies that there are $g\in G$, $r,r'\in R_g$ such that $r\in Q-Q'$ and $r'\in Q'-Q$. Set $t=r+r'$, then $t$ is a non nilpotent homogeneous element of $R$ since $t\not\in Q$. Note that $r/t$ and $r'/t$ are homogeneous elements of $R_t$ of degree $e$.  Since $r/t+r'/t=1$ and $R_t$ is a Gelfant graded ring, by Theorem \ref{comax}, there are $a/t^n, b/t^m\in (R_t)_e$  such that $(1-(r/t) (a/t^m))(1-(r'/t) (b/t^m))=0$. Multiplying by a suitable power of $t$, it follows   that $(t^k-ra')(t^k-r'b')=0$ for some $k\in \Bbb N$ where  $a'=at^{k-n-1}$ and $b'=bt^{k-m-1}$. Take $u=t^k-ra'$ and $v=t^k-r'b'$. We have 
$$\deg(ra')=\deg(r)\deg(a)\deg(t^{k-n-1})=gg^ng^{k-n-1}=g^k=\deg(t^k)$$ It follows that $u\in R_{g^k}$. We see also that $v\in R_{g^k}$. Now, observe that $uv=0$ and $u,v\in R_{g^k}$. Moreover, $u\not\in Q$, in fact if $u\in Q$, then $t^k-ra'\in Q $, so that $t^k\in Q$, that is $t\in Q$, a contradiction. By a same argument $v\not\in Q'$. It follows from Proposition \ref{pmbyelement} that $R$ is a pm$^+$ graded ring.   
\end{proof}
\begin{corollary}
Let $G$ be a torsion group and $G$ be a $G$-graded commutative ring. The following statements are equivalent.
\begin{enumerate}
\item $R$ is a pm$^+$ graded ring.
\item If $S\subseteq h(R)$ is a multiplicatively closed subset, then $R_S$ is a Gelfand graded ring.
\item If $t\in h(R) $ is a non nilpotent element of $R$, then $R_t$ is a Gelfand graded ring. 
\item If $t\in h(R) $ is a non nilpotent element of $R$, then $D_G(t)$ is a normal space. 
\end{enumerate}
\end{corollary}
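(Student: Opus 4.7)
The plan is to observe that this corollary is nothing more than a specialization of the preceding theorem, with the torsion hypothesis on $G$ supplying exactly the additional assumption required to close the circle of implications. So I would not re-derive anything; I would simply assemble two already-established facts.

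First, I invoke the previous theorem directly: without any extra hypothesis on $G$ or $R$, it gives the implications $(1)\Rightarrow (2)\Rightarrow (3)\Leftrightarrow (4)$. These three arrows therefore hold in our setting.

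Second, I need $(4)\Rightarrow (1)$. The previous theorem proves this implication under the condition (PCm), and Remark \ref{Torsion} tells us that whenever $G$ is a torsion group, every $G$-graded commutative ring $R$ automatically satisfies (PCm): if $Q, Q'\in G\mathrm{Spec}(R)$ are not comparable, one picks $a\in Q\setminus Q'$ of degree $g$ and $b\in Q'\setminus Q$ of degree $h$, and replaces them by $a^n, b^m$ where $g^n = h^m = e$, producing two elements of $R_e$ with the required separation property. Thus the torsion hypothesis on $G$ exactly supplies the assumption needed for the last implication of the preceding theorem.

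Combining these two observations yields the chain $(1)\Rightarrow (2)\Rightarrow (3)\Leftrightarrow (4)\Rightarrow (1)$, which is the desired equivalence. There is no real obstacle to overcome here; the corollary is a genuine consequence of the theorem once Remark \ref{Torsion} is cited. The only thing to be careful about in writing the proof is to make explicit that the torsion hypothesis is used only in the step $(4)\Rightarrow (1)$, via (PCm), rather than in any of the other implications, which hold in full generality.
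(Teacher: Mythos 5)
Your proposal is correct and is exactly the paper's argument: the paper's proof reads ``This is an immediate consequence of the previous Theorem and the Remark \ref{Torsion}.'' You simply spell out the same two ingredients (the unconditional implications $1)\Rightarrow 2)\Rightarrow 3)\Leftrightarrow 4)$ from the theorem, plus Remark \ref{Torsion} supplying (PCm) for $4)\Rightarrow 1)$) in slightly more detail.
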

\begin{proof}
This is an immediate consequence of the previous Theorem and the Remark \ref{Torsion}. 
\end{proof}
%%%%%%%%%%%%%%%%%%%%%%%%%%%%%%%%bib%%%%%%%%


\begin{thebibliography}{999}\addcontentsline{toc}{section}{\protect\numberline{}{Bibliography}}
\bibitem{Tariz} M. Aghajani, A. Tarizadeh. {\it Characterizations of Gelfand Rings Specially Clean Rings and their Dual Rings}. Results Math 75, 125 (2020). 
\bibitem{Aqa} Aqalmoun, M. (2022). {\it The homogeneous spectrum of a $\Bbb Z_2 $-graded commutative ring}. arXiv preprint arXiv:2208.04463.
\bibitem{Maria} M. Contessa, {\it On pm-rings}, Comm. Algebra 10 (1982) 93-108.
\bibitem{Maria1} M. Contessa. {\it On certain classes of pm-rings}, Comm. Algebra 12(1984) 1447-1469.
\bibitem{Marco} G. De Marco, A. Orsatti. {\it Commutative rings in which every prime ideal is contained in a unique maximal ideal}. Proc. Amer. Math. Soc., 30 (1971), 459-466.
\bibitem{Refai} M. Refai, M. Hailat and S. Obiedat,{\it Graded radicals on graded prime spectra},FarEast J. of Math. Sci., part I (2000), 59-73.
\bibitem{Daran}
A. Yousefian Darania and S. Motmaen. {\it Zariski topology on the spectrum of graded classical prime submodules}. Appl. Gen. Topol. 14, no. 2 (2013), 159-169.
\bibitem{Wu} X. Wu   {\it Graded $w$-Noetherian modules over graded rings}.  Bull. Korean Math. Soc. 2020;57:1319-1334.  https://doi.org/10.4134/BKMS.b191007
%%%%%%%%%%%%%%%%%%%%%%%%%false%%%%%%%%%%%%%%%%
\end{thebibliography}
\end{document}